\newtheorem{theorem}{Theorem}[section]
\newtheorem{proposition}[theorem]{Proposition}
\newtheorem{lemma}[theorem]{Lemma}
\newtheorem{corollary}[theorem]{Corollary}
\theoremstyle{definition}
\newtheorem{definition}[theorem]{Definition}
\theoremstyle{remark}
\newtheorem{remark}[theorem]{Remark}
\theoremstyle{remark}
\newtheorem{example}[theorem]{Example}
\theoremstyle{remark}
\newcommand{\supp}{\mathrm{supp}}
\newcommand{\fix}{\mathrm{Fix}}
\newcommand{\sym}{\mathrm{Sym}}
\newcommand{\sub}{\mathrm{Sub}}
\newcommand{\reg}{\mathrm{reg}}
\newcommand{\stab}{\mathrm{Stab}}
\newcommand{\ie}{\text{i.e.\;\,}}
\newcommand{\F}{\mathcal F}
\newcommand{\G}{G^{\mathrm {fix}}}
\begin{document}

\title[Invariant Random Subgroups of Full Groups]{On Invariant Random Subgroups of Block-Diagonal Limits of  Symmetric Groups}

\author{Artem Dudko}
\thanks{The research of A.D. was supported by the by the National Science Centre, Poland,
grant 2016/23/P/ST1/04088 under POLONEZ programme which has received funding from the EU\;\protect\includegraphics[width=.03\linewidth]{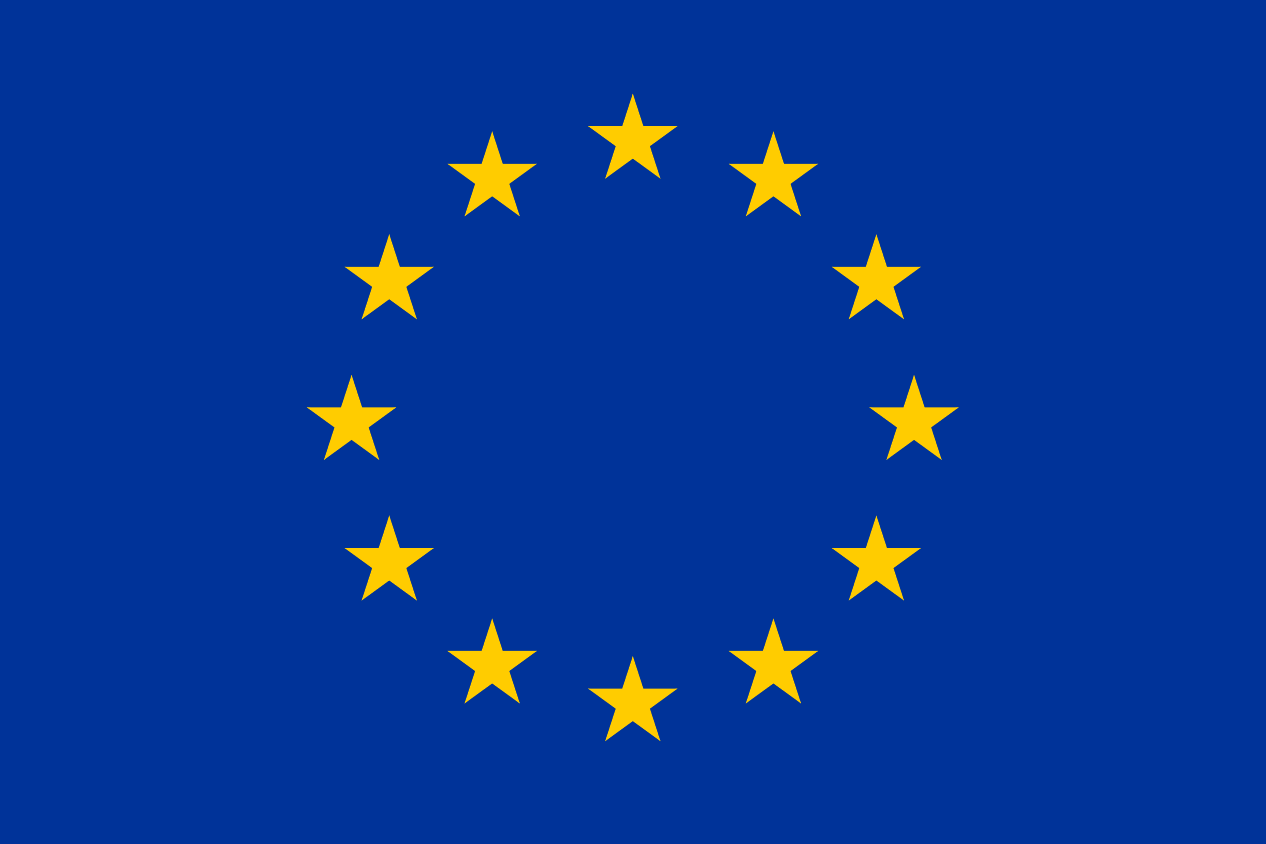} Horizon 2020 research and innovation programme under the MSCA grant agreement No. 665778.}
\address{Institute of Mathematics, Polish Academy of Sciences, Warsaw, Poland}
\email{adudko@impan.pl}

\author{Kostya Medynets}
\thanks{The research of K.M. was supported by NSA YIG H98230258656.}
\address{U.S. Naval Academy, Annapolis, MD, USA}
\email{medynets@usna.edu}

\keywords{Invariant random subgroups, topological dynamics, locally finite groups}
\subjclass[2010]{37B05, 20F50, 37A15}

\date{}

\begin{abstract}  We classify the ergodic invariant random subgroups of block-diagonal limits of symmetric groups in the cases when  the groups are simple and the associated dimension groups have finite dimensional state spaces. These block-diagonal limits arise as the transformation groups (full groups) of Bratteli diagrams  that preserve the cofinality of infinite paths in the diagram.  Given a simple full group $G$ admitting only a finite number of ergodic measures on the path-space $X$ of the associated Bratteli digram, we prove that every non-Dirac ergodic invariant random subgroup of $G$ arises as the stabilizer distribution of the diagonal action on $X^n$ for some $n\geq 1$.  As a corollary, we establish that every group character $\chi$ of $G$ has the form $\chi(g) = Prob(g\in K)$, where $K$ is a conjugation-invariant random subgroup of $G$.
\end{abstract}

\maketitle

\section{Introduction}

Let $G$ be a countable discrete group and let $\sub(G)$ be the  space of subgroups of $G$.  The set $\sub(G)$  is compact and zero-dimensional when equipped with the induced topology from  $\{0,1\}^G$. The group $G$ acts on $\sub(G)$ by  conjugation. A Borel probability $G$-invariant measure on $\sub(G)$  is called an {\it invariant random subgroup (IRS) of $G$}. We notice that the Dirac measures supported by the trivial subgroups $\{G\}$ and $\{e\}$ in $\sub(G)$ are invariant random subgroups. More generally, the Dirac IRS's   correspond to normal subgroups of $G$. In some sense, the invariant random subgroups can be regarded as ``generalized'' normal subgroups.

Suppose the group $G$ acts on a measure space $(Y,\nu)$ by measure-preserving transformations. Then the push-forward measure of $\nu$ under the stabilizer map $\stab_G: Y\rightarrow \sub(G)$ given by $\stab_G(y) = \{g\in G : g(y)=y\}$ is an IRS. In fact,
Abert-Glasner-Virag  \cite[Proposition 13]{AbertGlasnerVirag:2014} established that every IRS occurs this way. Creutz-Peterson  \cite[Proposition 3.5]{CreutzPeterson:2017} further refined this result by proving that ergodic invariant random subgroups arise from ergodic actions.

Invariant random subgroups can be used to construct group characters for the group in question. Recall that a character of a group $G$ is a function $f: G\rightarrow \mathbb C$ such that (1) $f(e) = 1$, (2) $f(ab)=f(ba)$ for every $a,b\in G$, and (3) $f$ is positive semidefinite, i.e, the matrix $M = \{f(g_ig_j^{-1})\}_{i,j=1}^n$ is positive semidefinite for any $n\geq 1$ and any family of group elements $g_i\in G$, $i=1,\ldots,n$. The classification of group characters is equivalent to the classification of $II_1$-factor group representations, see, for example, Dudko-Medynets \cite[Section 2.3]{DudkoMedynets:2013}. Given an invariant random subgroup $\varphi$ of $G$, we can associate two characters to $\varphi$: $$\chi_\varphi(g) = \varphi(\{H\in \sub(G) : g\in H\})\mbox{ and }\chi_\varphi'(g) = \varphi(\{H\in \sub(G) : gHg^{-1} = H\}).$$ It is natural to understand for what class of groups (1) the group characters are always of the form $\chi_\varphi$ or $ \chi_\varphi'$ and (2) $\chi_\varphi \equiv \chi_\varphi'$. A possibility of a strong connection between group characters and invariant random subgroups was suggested by Vershik  \cite{Vershik:2010}. Thomas-Tucker-Drob \cite{ThomasTuckerDrob:2014} classified IRSs of diagonal inductive limits of finite symmetric groups in the cases when these groups are simple. Thomas-Tucker-Drob's result along with the classification of characters for such inductive limits obtained by Leinen-Puglisi \cite{LeinenPuglisi:2004}, Dudko \cite{Dudko:2011} and for more general block-diagonal limits by Dudko-Medynets \cite{DudkoMedynets:2013} shows that group characters of diagonal inductive limits of symmetric groups are of the form $\chi_\varphi$. The classification of IRSs and the description of characters via IRSs were later obtained for the class of groups that can be represented as increasing unions of finite alternating groups in Thomas \cite{Thomas:2016}, Thomas-Tucker-Drob \cite{ThomasTuckerDrob:2014,ThomasTuckerDrob:2016}, and Vershik \cite{Vershik:NonfreeActionsSymmetricGroup:2012}.

The main result of the paper is the classification of invariant random subgroups for block-diagonal limits of symmetric groups or, equivalently, for simple AF full groups (see the definition in Section \ref{SectionPreliminaries}) whose associated  Bratteli diagrams admit only finitely many ergodic measures.
One of the simplest examples of  AF full groups can be given as follows. Consider a sequence of natural numbers $\{h_n\}_{n\geq 0}$ such that $h_0 = 1$ and $h_n\geq 2$, $h_n$ divides $h_{n+1}$ for every $n\geq 1$. Set $X_n = \{0,\ldots,h_n-1\}$. Notice that the set $X_{n+1}$ can be represented as a disjoint union of $h_{n+1}/h_n$ copies of $X_n$. Then each element $g$ of the symmetric group $\sym(X_n)$  can be embedded into $\sym(X_{n+1})$ by making it act on each copy of $X_n$ in $X_{n+1}$ as $g$.  Denote by $G$ the inductive limit  of groups $S(X_n)$ under this embedding scheme. The group $G$ is an example of an AF full group. The group  $G$ has a natural continuous action on $X = \prod_{n\geq 1}\{0,\ldots,r_n-1\}$, $r_n = h_{n}/h_{n-1}$, preserving the tails of the sequences. Equivalently,  the space $X$ can be viewed as the path-space of the Bratteli diagram corresponding to the $\{r_n\}$-odometer and the group $G$ becomes the AF full group of the odometer. In this paper, we are interested in the study of full groups of general simple Bratteli diagrams.

The AF full groups arose from the study of orbit equivalence theory of Cantor minimal systems  developed in the series of papers  Herman-Putnam-Skau \cite{HermanPutnamSkau:1992} and Giordano-Putnam-Skau \cite{GiordanoPutnamSkau:1995,GiordanoPutnamSkau:1999} and motivated by applications to the theory of $C^*$-algebras. Giordano-Putnam-Skau  \cite[Corollary 4.11]{GiordanoPutnamSkau:1999} and \cite[Theorem 2.1]{GiordanoPutnamSkau:1995} showed that Cantor minimal $\mathbb Z$-systems are strong orbit equivalent if and only if the associated AF full groups are isomorphic as abstract groups and that the isomorphism of crossed product $C^*$-algebras  is completely characterized by the strong orbit equivalence of underlying dynamical systems. Thus, any information about the algebraic structure of AF full groups can be used to distinguish the associated dynamical systems and the crossed product $C^*$-algebras.

Let $G$ be an AF full group and $B$ be the associated Bratteli diagram. Denote by $X$ the path-space of $B$.  Then the group $G$ acts on $X$ by homeomorphisms by permuting initial segments of the infinite paths. Suppose that the dynamical system $(X,G)$ admits only a finite number of ergodic measures, say,  $\mu_1,\ldots, \mu_k$. Note that $k\geq 1$. For a $k$-tuple $\alpha = (\alpha_1,\ldots,\alpha_k)$, denote by $|\alpha|$ the $1$-norm of $\alpha$. For $\alpha\in \mathbb Z_{\geq 0}^k$, denote by $\mu_\alpha$ the product measure $\mu_1^{\alpha_1}\times \cdots \times \mu_k^{\alpha_k}$ on $X^{|\alpha|}$. Note that $\mu_\alpha$ is $G$-invariant under the diagonal action of $G$ on $X^{|\alpha|}$. By definition, $X^0$ is a singleton and the action of $G$ on $X^0$ is trivial. For $\alpha\in \mathbb Z_{\geq 0}^k$, denote by $\varphi_\alpha$ the stabilizer distribution of $(X^{|\alpha|},\mu_\alpha,G)$. Note that $\varphi_{(0,\ldots,0)} = \delta_{\{G\}}$.   In this paper, under the additional assumption of simplicity of the group $G$ we prove that $\{\varphi_\alpha : \alpha\in \mathbb Z_{\geq 0}^k\}$ are the only non-trivial ergodic IRSs the group $G$ possesses. The proof will rely on the classification of characters for this class of groups established by the authors in \cite{DudkoMedynets:2013}.

\begin{theorem}\label{TheoremMainIntro}
 Let $G$ be a simple AF full group and $X$ be the path-space of the associated Bratteli diagram. Suppose that the dynamical system $(X,G)$ admits only a finite number of ergodic measures $\{\mu_1,\ldots,\mu_k\}$, where  $1\leq k <\infty$.

(1) For every $\alpha \in \mathbb Z_{\geq 0}^k$, the stabilizer distribution $\varphi_\alpha$ of $(X^{|\alpha|},\mu_\alpha,G)$, $\mu_\alpha = \mu_1^{\alpha_1}\times \cdots \times \mu_k^{\alpha_k}$, is an ergodic invariant random subgroup of $G$.

(2) If $\varphi$ is an ergodic invariant random subgroup of $G$, then either $\varphi =\delta_{\{e\}} $ or $\varphi$  is one of the following: $$ \{\varphi_\alpha : \alpha \in \mathbb Z_{\geq 0}^k\}.$$
\end{theorem}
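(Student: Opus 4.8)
The overall plan is to exploit the correspondence $\varphi \mapsto \chi_\varphi$, with $\chi_\varphi(g) = \varphi(\{H : g\in H\})$, between invariant random subgroups and characters, and to play it against the classification of characters of $G$ obtained in \cite{DudkoMedynets:2013}. Part (1) is an ergodicity statement about the product actions; part (2) proceeds by realizing an ergodic IRS as the stabilizer distribution of an ergodic action, reconstructing the fixed configuration of a $\varphi$-random subgroup, and then using the character to identify the configuration measure with $\mu_\alpha$.

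\emph{Part (1).} Since $\varphi_\alpha$ is by construction the stabilizer distribution of the measure-preserving action $(X^{|\alpha|},\mu_\alpha,G)$, it is an IRS, and by the correspondence of Abert--Glasner--Virag \cite{AbertGlasnerVirag:2014} and Creutz--Peterson \cite{CreutzPeterson:2017} it is ergodic once $(X^{|\alpha|},\mu_\alpha,G)$ is ergodic. I would prove this ergodicity by first showing that each factor $(X,\mu_i,G)$ is weakly mixing, which I expect to follow from the richness of the full group $G$: given any finite-dimensional $G$-invariant subspace of $L^2(X,\mu_i)$, one produces elements of $G$ supported on small cylinder sets that cannot preserve it unless it is trivial. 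Since a finite product of weakly mixing systems is again weakly mixing, the product $(X^{|\alpha|},\mu_\alpha,G)$ is weakly mixing, hence ergodic. A direct computation using independence of the coordinates then gives
\[
\chi_{\varphi_\alpha}(g) = \prod_{i=1}^k \mu_i(\fix(g))^{\alpha_i}, \qquad \fix(g) = \{x\in X : g(x)=x\},
\]
which is precisely the family of characters produced by the classification in \cite{DudkoMedynets:2013}.

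\emph{Part (2).} Let $\varphi \neq \delta_{\{e\}}$ be an ergodic IRS. By \cite{CreutzPeterson:2017} realize it as the stabilizer distribution of an ergodic action $(Y,\nu,G)$, so that $\chi_\varphi(g) = \nu(\fix_Y(g))$. If this character is the regular character $[g=e]$, then for each $g\neq e$ the event $\{g\in H\}$ has probability zero, so by countability $H = \{e\}$ for $\varphi$-a.e.\ $H$, i.e.\ $\varphi = \delta_{\{e\}}$, contrary to assumption. Hence $\chi_\varphi$ is a nonregular character, and the plan is to show it forces $\varphi$ to be a stabilizer IRS.

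The core — and the step I expect to be the main obstacle — is the reconstruction: showing that for $\varphi$-a.e.\ $H$ there is a finite configuration $\bar x = (x_1,\ldots,x_n)\in X^n$ with $H = \stab_G(\bar x)$. For a genuine point stabilizer one recovers the configuration intrinsically as the common fixed-point set $\fix(H) = \{x_1,\ldots,x_n\}$, using that $G$ is rich enough to move any other point while fixing the $x_j$. I would show the same rigidity holds $\varphi$-almost surely by controlling the subgroups of $G$ that can support an ergodic IRS: using simplicity of $G$ and the near-simplicity of the point stabilizers (their commutator subgroups are already large in $G$), a $\varphi$-random $H$ must lie between the pointwise stabilizer of $\fix(H)$ and its normalizer, and ergodicity together with conjugation-invariance then forces $H$ to be exactly $\stab_G(\fix(H))$. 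Once $\varphi$ is identified as the law of $\stab_G(\bar x)$ for $\bar x$ distributed according to some $G$-invariant measure $\lambda$ on $\bigsqcup_n X^n$, matching the character $\chi_\varphi(g) = \lambda(\fix(g))$ against $\prod_i \mu_i(\fix(g))^{\alpha_i}$ determines $\lambda = \mu_\alpha$ (the fixed-point function determines the configuration measure since $g$ may be chosen supported on arbitrary cylinders), and ergodicity excludes nontrivial mixtures of different $\alpha$. This yields $\varphi = \varphi_\alpha$, and it is exactly here — in passing from the single-element data encoded by $\chi_\varphi$ to the full joint law of $\varphi$ — that the decisive work lies.
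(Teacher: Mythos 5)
Your overall strategy for Part (2) --- pass to the character $\chi_\varphi$, invoke the classification of characters from \cite{DudkoMedynets:2013}, and then upgrade that information to identify $\varphi$ itself --- is exactly the paper's strategy, and Part (1) is essentially the paper's argument (the paper proves weak mixing of each factor via Thomas--Tucker-Drob's observation that a locally finite infinite simple group has no finite-dimensional faithful unitary representations, by Jordan--Schur; your ``elements supported on small cylinder sets'' sketch is vaguer but aims at the same conclusion). The problem is that the step you yourself flag as ``the main obstacle'' --- showing that a $\varphi$-random subgroup $H$ is the full stabilizer of a finite configuration --- is precisely where all the work of the paper lies, and your proposed route through it does not close. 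You assert that $H$ ``must lie between the pointwise stabilizer of $\fix(H)$ and its normalizer,'' but nothing in your argument shows that $\fix(H)$ is finite, or even nonempty, for $\varphi$-a.e.\ $H$; a priori an ergodic non-atomic IRS could be carried by subgroups with empty or uncountable common fixed-point sets, and simplicity of $G$ alone does not rule this out. The character identity $\nu(\fix_Y(g))=\sum_\alpha c_\alpha\prod_i\mu_i(\fix_X(g))^{\alpha_i}$ is single-group-element data and does not by itself constrain the joint structure of $H$.

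The paper's mechanism for the reconstruction is quite different and is the content you are missing. For a clopen set $A$ it considers $\G(A)$, the subgroup of elements fixing $A$ pointwise, and $\F(A)$, the set of subgroups normalized by every element of $\G(A)$. An ergodic-averaging argument (Proposition \ref{PropositionErgodicAverage} and Corollary \ref{corollaryLimitFixedPoints}, a groupoid-representation computation showing the average of $\varphi(\fix_\Omega(g))$ over $\G_n(A)$ converges to $\varphi(\F(A))$) combines with the character decomposition to give the key quantitative identity $\varphi(\F(A))=\sum_\alpha c_\alpha\,\mu_\alpha(A^{|\alpha|})$ (Lemma \ref{LemmaPhiFA}). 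An inclusion-exclusion over refining clopen partitions, a compactness limit (Lemmas \ref{LemmaDecrFam} and \ref{LemmaIntersFC}), and an orbit-disjointness argument (Lemma \ref{LemmaPhiXr-Zr}) then force $\varphi$ to be supported on $\bigcup_{\bar x}\F(\bar x)$ for finite configurations $\bar x$ with pairwise distinct orbits; the algebraic Lemma \ref{LemmaProperties3} identifies every subgroup in such an $\F(\bar x)$ as $\G(Z)$ for some $Z\subset\bar x$ (or trivial), which is the reconstruction you wanted. Finally, the identification $\varphi=\sum_\alpha c_\alpha\varphi_\alpha$ is not read off from the character directly but from the fact that the sets $\F(A)$ (for $A$ clopen with all $\mu_i(A)<1/3$) form an intersection-stable family separating points of the support, so a uniqueness-of-measures theorem applies; ergodicity then kills the mixture. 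Without some substitute for the averaging identity and the analysis of $\F(\bar x)$, your proposal does not constitute a proof.
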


Combining the classification of the group characters established in \cite{DudkoMedynets:2013} with Theorem \ref{TheoremMainIntro}, we obtain the following result.

\begin{theorem}
  Let $G$ be an AF full group satisfying the assumptions of Theorem \ref{TheoremMainIntro}.

(1) For every character $\chi$ of $G$ there exists a unique IRS $\varphi$ such that $$\chi(g) = \varphi(\{H\in \sub(G) : g\in H\})\mbox{ for every }g\in G.$$

(2) For every IRS $\varphi$, $\chi_\varphi(g) = \chi_\varphi'(g)$ for every $g\in G$.
\end{theorem}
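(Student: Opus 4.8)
The plan is to transport the classification of ergodic IRS furnished by Theorem \ref{TheoremMainIntro} across the correspondence $\varphi \mapsto \chi_\varphi$ into the classification of indecomposable characters obtained in \cite{DudkoMedynets:2013}, and then to run the argument backwards. I would set things up at the level of Choquet simplices: the characters of $G$ form a Choquet simplex whose extreme points are the indecomposable characters, the IRS of $G$ form a Choquet simplex whose extreme points are the ergodic IRS, and in each simplex the ergodic (barycentric) decomposition is unique. The map $\Phi\colon \varphi \mapsto \chi_\varphi$, where $\chi_\varphi(g) = \varphi(\{H\in\sub(G): g\in H\})$, is affine and weak-$*$ continuous, because each set $\{H : g\in H\}$ is clopen in $\sub(G)$. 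Thus the whole statement reduces to understanding $\Phi$ on extreme points.

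First I would evaluate $\Phi$ on the ergodic IRS of Theorem \ref{TheoremMainIntro}. Since $g\in\stab(\vec y)$ exactly when $g$ fixes every coordinate of $\vec y\in X^{|\alpha|}$, the product structure of $\mu_\alpha$ gives
\[
\chi_{\varphi_\alpha}(g) \;=\; \mu_\alpha\big(\{\vec y : g\vec y = \vec y\}\big)\;=\;\prod_{i=1}^{k}\mu_i\big(\fix(g)\big)^{\alpha_i},\qquad \fix(g)=\{x\in X : gx=x\},
\]
while $\chi_{\delta_{\{e\}}}(g)=\mathbf 1[g=e]$ is the regular character. I would then invoke the character classification of \cite{DudkoMedynets:2013}: under the standing hypotheses the indecomposable characters of $G$ are precisely the functions $g\mapsto\prod_i\mu_i(\fix(g))^{\alpha_i}$, $\alpha\in\mathbb Z_{\geq 0}^k$, together with the regular character. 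Hence $\Phi$ restricts to a bijection from the extreme points of the IRS simplex onto the extreme points of the character simplex. Part (1) is then formal: by uniqueness of representing measures in a Choquet simplex, an affine continuous map inducing a bijection of extreme boundaries lifts to a bijection of the simplices. Concretely, given $\chi$ I take its unique representing measure $Q$ on indecomposable characters, pull it back along the bijection to a measure $P$ on ergodic IRS, and set $\varphi$ to be the barycenter of $P$; then $\chi_\varphi=\int\chi_e\,dQ=\chi$, and uniqueness of $\varphi$ follows because $\chi_\varphi$ determines $(\Phi)_*P_\varphi$, hence $P_\varphi$, hence $\varphi$.

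For part (2) I would first reduce to ergodic components: both $\varphi\mapsto\chi_\varphi$ and $\varphi\mapsto\chi'_\varphi$ are affine in $\varphi$, so it suffices to verify $\chi_{\varphi_\alpha}=\chi'_{\varphi_\alpha}$ on each $\varphi_\alpha$. Writing $H=\stab(\vec y)$ and using $gHg^{-1}=\stab(g\vec y)$, the event defining $\chi'_{\varphi_\alpha}(g)$ is $\{\vec y : \stab(g\vec y)=\stab(\vec y)\}$, which always contains $\fix$ of the diagonal action. The key step is a separation lemma: because $G$ is the full group of a simple Bratteli diagram and therefore has rich local supports, for $\mu_\alpha$-almost every $\vec y$ (whose coordinates are pairwise distinct, as the $\mu_i$ are non-atomic) the common fixed-point set of the subgroup $\stab(\vec y)$ is exactly the coordinate set of $\vec y$. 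Consequently $\stab(g\vec y)=\stab(\vec y)$ forces $g$ to permute that coordinate set. A measure-zero argument then closes the gap: any nontrivial such permutation forces one coordinate to equal the $g$-image of another, a graph condition that is $\mu_\alpha$-null by non-atomicity; hence up to null sets $\{\vec y : \stab(g\vec y)=\stab(\vec y)\}=\{\vec y : g\vec y=\vec y\}$, giving $\chi'_{\varphi_\alpha}=\chi_{\varphi_\alpha}$.

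The step I expect to be the main obstacle is this separation lemma together with the measure-theoretic cleanup: proving that the pointwise stabilizer of a generic tuple recovers its coordinate set, and bounding the ``permuted-but-not-fixed'' configurations by a null set; this is precisely where the structure of full groups of simple Bratteli diagrams (ample local supports, non-atomic invariant measures) is indispensable. A secondary point to handle carefully is the degenerate IRS $\delta_{\{e\}}$, for which $\chi'_{\delta_{\{e\}}}\equiv 1$ whereas $\chi_{\delta_{\{e\}}}$ is the regular character; this is the one ergodic IRS on which the two associated characters genuinely differ, and I would reconcile it against the precise scope of the stated identity.
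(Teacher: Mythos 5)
Your proposal is correct and follows the route the paper intends: the paper offers no separate proof of this theorem, deriving it by ``combining'' the character classification of \cite{DudkoMedynets:2013} with Theorem \ref{TheoremMainIntro}, which is exactly your matching of extreme points of the two Choquet simplices ($\varphi_\alpha\mapsto\chi_\alpha$, $\delta_{\{e\}}\mapsto\chi_{\reg}$) followed by uniqueness of barycentric decomposition, and your separation/null-set argument for the ergodic $\varphi_\alpha$ in part (2) is the right (and necessary) computation. Your closing caveat is a genuine catch rather than a gap in your own argument: for $\varphi=\delta_{\{e\}}$ one has $\chi'_{\delta_{\{e\}}}\equiv 1$ while $\chi_{\delta_{\{e\}}}=\chi_{\reg}$, so part (2) as literally stated fails for any IRS with mass on $\{e\}$ and must be read as excluding that component; the paper does not address this.
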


We note that the structure of the simplex of invariant measures of any Bratteli diagram  -- and, thus, the structure of the IRSs for the associated full group -- is completely determined by the asymptotic/combinatorial properties of the diagram and has been extensively studied both from the ergodic theory  and operator theory prospectives, see, for example, Bezuglyi-Kwiatkowski-Medynets-Solomyak \cite{BezuglyiKwiatkowskiMedynetsSolomyak:2013} and references therein. We note that the measures on a Bratteli diagram are in one-to-one correspondence with the states on the associated dimension group or, equivalently, $K_0$-group of the associated $AF$-algebra.

In Section \ref{SectionPreliminaries} we give necessary background information on full groups and Bratteli diagrams. In Section \ref{SectionStabilizers} we present several auxiliary results on stabilizer subgroups of full groups and fixed point sets for actions of full groups on Bratteli diagrams. The proofs of the main results are presented in Section \ref{SectionMainResult}.

%
%
%

\section{Preliminaries}\label{SectionPreliminaries}

In this section we collect the notation and basic definitions that
are used throughout the paper. Since the notion of Bratteli
diagrams has been discussed in numerous recent papers, they might be considered as
almost classical nowadays. An interested reader may consult the papers Herman-Putnam-Skau
\cite{HermanPutnamSkau:1992}, Dudko-Medynets \cite{DudkoMedynets:2013},
Bezuglyi-Kwiaktowski-Medynets-Solomyak
\cite{BezuglyiKwiatkowskiMedynetsSolomyak:2013} and references
therein for all details concerning Bratteli diagrams and related dynamical concepts. We only give here some basic definitions in order to fix our
notation.

\begin{definition}\label{Definition_Bratteli_Diagram} A {\it Bratteli diagram} is
an infinite graph $B=(V,E)$ such that the vertex set
$V=\bigcup_{i\geq 0}V_i$ and the edge set $E=\bigcup_{i\geq 1}E_i$
are partitioned into disjoint subsets $V_i$ and $E_i$ such that

(i) $V_0=\{v_0\}$ is a single point;

(ii) $V_i$ and $E_i$ are finite sets;

(iii) there exist a range map $r$ and a source map $s$ from $E$ to
$V$ such that $r(E_i)= V_i$, $s(E_i)= V_{i-1}$, and
$s^{-1}(v)\neq\emptyset$, $r^{-1}(v')\neq\emptyset$ for all $v\in V$
and $v'\in V\setminus V_0$.
\end{definition}

The pair $(V_i,E_i)$ or just $V_i$ is called the $i$-th level of the diagram $B$.
 A finite or infinite sequence of edges $(e_i : e_i\in E_i)$ such
that $r(e_{i})=s(e_{i+1})$ is called a {\it finite} or {\it infinite
path}, respectively. We write $e(v,v')$ to denote a path $e=(e_i,e_{i+1},\ldots,e_j)$ such
that $s(e_i)=v$ and $r(e_j)=v'$. For a Bratteli diagram $B$, we denote
by $X_B$ the set of infinite paths starting at the vertex $v_0$. We
endow $X_B$ with the topology generated by cylinder sets
$U(e_1,\ldots,e_n)=\{x\in X_B : x_i=e_i,\;i=1,\ldots,n\}$, where
$(e_1,\ldots,e_n)$ is a finite path from $B$. Then
$$X_B = \left\{\{e_n\}\in \prod_{i\geq 1}E_i : s(e_{i+1}) = r(e_i)\mbox{ for every }i\geq 1\right\}$$ is a
0-dimensional compact metric space with respect to the product topology.

Given a Bratteli diagram $B$, for every $n\geq 1$ denote by $G_n$ the group of homeomorphisms of $X_B$ that permute only the initial $n$ segments of the infinite paths $\{e_1,\ldots,e_n,e_{n+1}\ldots\}\in X_B.$  For each $n\geq 1$ and each vertex $v\in V_n$, denote by $X_v^{(n)}$ the set of all infinite paths $\{e_1,e_2,\ldots\}$ such that $r(e_n) = v$. Denote by $h_v^{(n)}$ the number of finite paths connecting the root vertex $v_0$ to the vertex $v$ and denote by $E(v_0,v)$ the set of finite paths connecting these vertices.  Set $X_v^{(n)}(\overline e)$, $\overline e\in E(v_0,v)$, to be the set of infinite paths whose first $n$ initial segments coincide with those of $\overline e$. Note that $X_v^{(n)}(\overline e)$ is a clopen set.
Then for every $n\geq 1$ and every $v\in V_n$, we have that $$X =  \bigsqcup_{w\in V_n}X_w^{(n)}  \mbox{ and }X_v^{(n)} = \bigsqcup_{\overline e \in E(v_0,v)}X_v^{(n)}(\overline e). $$

Denote by $G_v^{(n)}$ the subgroup of $G$ whose elements permute only the first $n$ segments of paths from $X_v^{(n)}$. Thus,  the group $G_v^{(n)}$ is isomorphic to the symmetric group on $E(v_0,v)$, that is, $G_v^{(n)} \cong \sym(h_v^{(n)})$.  It follows that $$G_n = \prod_{v\in V_n} G_v^{(n)} \cong \prod_{v\in V_n}\sym(h_v^{(n)}).$$  Set $G_B = \bigcup_{n\geq 1}G_n$. Note that $G_n\subset G_{n+1}$ for every $n\geq 1$ and $G_B$ is a locally finite group.

\begin{definition}
  \label{DefinitionAFFullGroup} Given a Bratteli diagram $B$, the group $G_B$ defined above is called the {\it full group associated to the diagram $B$}. We will simply write $G$ when the diagram $B$ is obvious form the context.
\end{definition}

The following remark reveals the connection between algebraic properties of the full groups and combinatorial properties of the associated Bratteli diagrams. The proofs and the related references can be found in Dudko-Medynets \cite[Section 2.1]{DudkoMedynets:2013}.

\begin{remark} Let $B = (V,E)$ be a Bratteli diagram and $G_B$ be the associated full group.

\begin{enumerate}

\item The dynamical system $(X_B,G_B)$ is  minimal, that is every $G_B$-orbit is dense, if and only if the Bratteli diagram $B$ is {\it simple}, that is, for every $n\geq 1$ there exists $m> n$ such that every vertex in $V_n$ is connected to every vertex in $V_m$.

\item The dynamical system $(X_B,G_B)$ is  minimal if and only if the commutator subgroup of $G_B$ is simple, i.e., has no normal subgroups.

\item The group $G_B$ is simple if and only if for every $n\geq 1$ there exists $m> n$ such that every vertex in $V_n$ is connected to every vertex in $V_m$ and the number of paths between these vertices is even. We refer to Bratteli diagrams with this property as {\it even diagrams}. In this case, the group $G_B$ coincides with its commutator subgroup.
\end{enumerate}
\end{remark}

Fix an even Bratteli diagram $B$. Suppose that the dynamical system $(X_B,G_B)$ admits only a finite number of ergodic measures $\{\mu_1,\ldots,\mu_k\}$. For example, any Bratteli diagrams whose number of vertices per level is uniformly bounded, say, by $K$ cannot admit more than $K$ ergodic measures, see Bezuglyi-Kwiatkowski-Medynets-Solomyak \cite[Proposition 2.13]{BezuglyiKwiatkowskiMedynetsSolomyak:2013}.

\begin{example} Given a Bratteli diagram $B = (V,E)$, the incidence matrix $F_n = (f_{v,w}^{(n)})$, $n\geq 1$, is a $|V_{n+1}|\times |V_n |$ matrix whose entries $f_{v,w}^{(n)}$ are equal to the number of edges between the vertices $v\in V_{n+1}$ and $w\in V_n$.  Consider the Bratteli diagram $B$ given by the sequence of incidence matrices
$$F_n = \left(
 \begin{array}{cc}
1  & 1 \\
n & 1 \\
\end{array}
\right).$$

The diagrammatic representation of $B$ is shown in the following figure.

\begin{figure}[tph!]
\centering
        \includegraphics[totalheight=2.7cm]{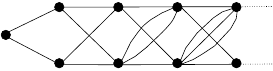}
    \caption{Diagrammatic representation of a Bratteli diagram.}
    \label{fig:verticalcell}
\end{figure}
\end{example}

Let $h_b^{(n)}$  and $h_t^{(n)}$ be the number of edges connecting the root (leftmost)
vertex to the bottom and to the top vertex of the level $n$, respectively.
Then the full group of the diagram $B$ is isomorphic to
$G_B = \bigcup_{n\geq 1}G_n$, where $G_n = \sym(h_b^{(n)})\times \sym(h_t^{(n)})$ and the embedding scheme is described by the diagram $B$. Bezuglyi-Kwiatkowski-Medynets-Solomyak
\cite[Example 5.8]{BezuglyiKwiatkowskiMedynetsSolomyak:2013} showed that the dynamical system $(X_B,G_B)$
is uniquely ergodic and that $$h_b^{(n)} = 2\left(\frac{-i}{\sqrt 2} \right)^{n-2}H_{n-2}(i\sqrt 2),\;n\geq 2,$$
and
$$h_t^{(n)} = nh_b^{(n-1)}+(n-3)h_b^{(n-2)},\;n\geq 3.$$ Here $H_n(x)$ is the $n$-th Hermite polynomial.

\section{Stabilizer Subgroups and Fixed Point sets}\label{SectionStabilizers}

In this section we establish several properties of stabilizer subgroups of full groups and their fixed point sets.

\begin{definition}
  Suppose a group $G$ acts on a space $Z$. Given a group element $g\in G$, the set $\fix_Z(g) = \{z\in Z : g(z) = z\}$ is called the {\it set of fixed points of $g$}. We will simply write $\fix(g)$ when the group action is evident from the context.
\end{definition}

\begin{definition} Let $G$ be the full group of a Bratteli diagram $B$ and $X_B$ be the path-space of $B$.

(1) For a closed set $A\subset X_B$, denote by $ \G(A)$ the subgroup of elements $g\in G$ such that $\fix(g) = \{x\in X_B : g(x) = x\}$ contains the set $A$. 
In other words, if $g\in \G(A)$, then $g(x) = x$ for every $x\in A$. We note that the group $\G(A)$ is supported by the complement of $A$. Observe that $\G(\varnothing)=G$.

(2) For a closed set $A\subset X_B$, denote by $\F(A)$ the set of subgroups $H\in \sub(G)$ such that $gHg^{-1} = H$ for every $g\in \G(A)$.
\end{definition}

\begin{remark}\label{RemarkFFamilyIncreasing} (1) We note that if $A\subset B\subset X$, then $\F(A)\subset \F(B)$.

(2) Let $B = (V,E)$ be an even Bratteli diagram, $X$ the path-space, and $G$ the associated full group. Then for each clopen set $A\subset X$, the group $\G(A)$ is simple, see Dudko-Medynets \cite[Section 2.1]{Dudko:2011}.
\end{remark}

The following lemmas establish several properties of the group $\G(A)$ and the corresponding stabilizer set $\F(A)$.

\begin{lemma}\label{LemmaProperties1} Let $G$ be the full group of a simple
Bratteli diagram and $X$ be the path-space of the diagram. Then for any clopen
sets $A,B\subset X$ with $A\cup B \neq X$, we have that

\begin{enumerate}
\item $\G(A\cap B) = <\G(A),\G(B)>$;

\item $\F(A\cap B) = \F(A)\cap \F(B)$.
\end{enumerate}
\end{lemma}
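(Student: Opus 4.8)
The plan is to deduce part (2) formally from part (1) and to prove part (1) by passing to a sufficiently deep level of the diagram, where the simplicity of $B$ guarantees enough ``room'' to carry out the required decomposition inside finite symmetric groups. Throughout I use the reformulation that $g\in\G(A)$ precisely when $\supp(g)\subseteq X\setminus A$, so that $\G(A)$ and $\G(B)$ are the elements supported off $A$ and off $B$, respectively. For part (2), since $A\cap B\subseteq A$ and $A\cap B\subseteq B$, Remark \ref{RemarkFFamilyIncreasing}(1) already gives $\F(A\cap B)\subseteq\F(A)\cap\F(B)$. Conversely, if a subgroup $H$ is normalized by both $\G(A)$ and $\G(B)$, then it is normalized by the subgroup they generate, which by part (1) equals $\G(A\cap B)$; hence $H\in\F(A\cap B)$. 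Thus part (2) is immediate once part (1) is established.

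For part (1), the inclusion $\langle\G(A),\G(B)\rangle\subseteq\G(A\cap B)$ is routine: every generator is supported in $(X\setminus A)\cup(X\setminus B)=X\setminus(A\cap B)$ and therefore fixes $A\cap B$ pointwise. The content lies in the reverse inclusion. Given $g\in\G(A\cap B)$, I would first choose a level $N$ so large that $g\in G_N$ and both $A$ and $B$ are unions of level-$N$ cylinder sets; put $U=X\setminus A$, $V=X\setminus B$, and $W=U\cap V=X\setminus(A\cup B)$. By the hypothesis $A\cup B\neq X$, the set $W$ is a nonempty clopen set, and it too is a union of level-$N$ cylinders. Using simplicity, I would then fix $M>N$ such that every vertex of $V_N$ is joined to every vertex of $V_M$. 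Since $W$ contains some cylinder $X_{v_W}^{(N)}(\overline e)$ and $v_W$ is joined to every $w\in V_M$, each fiber $E(v_0,w)$ contains a path whose level-$M$ cylinder lies in $W$; in other words, the portion of every level-$M$ fiber lying in $W$ is nonempty.

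At level $M$ we have $g\in G_M=\prod_{w\in V_M}\sym(E(v_0,w))$, so $g$ is a commuting product of permutations $\sigma_w$, each supported on the set of paths in $E(v_0,w)$ whose cylinders lie in $U\cup V$. Because every fiber now meets $W=U\cap V$, the subsets of paths lying in $U$ and in $V$ cover the support of $\sigma_w$ and intersect nontrivially; hence the standard fact that $\sym(S)$ is generated by the symmetric groups of two subsets covering $S$ with nonempty intersection applies. Concretely, a transposition $(a,b)$ with $a$ in the $U$-part and $b$ in the $V$-part may be written as $(a,c)(c,b)(a,c)$ with $c$ a path in $W$, exhibiting $\sigma_w$ as a product of transpositions each confined to the $U$-part or the $V$-part of a single fiber. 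Viewed in $G_M$, a transposition confined to the $U$-part is supported in $U=X\setminus A$ and so lies in $\G(A)$, and one confined to the $V$-part lies in $\G(B)$. Collecting these factors over all $w\in V_M$ writes $g$ as a product of elements of $\G(A)$ and $\G(B)$, so $g\in\langle\G(A),\G(B)\rangle$.

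The main obstacle is precisely this reverse inclusion, and more specifically the phenomenon that at the initial level $N$ a single fiber may have its $U$-part and $V$-part disjoint while $\sigma_w$ carries a point of one into the other---an exchange that cannot be assembled from transpositions confined to either part. The resolution is the passage to level $M$: simplicity forces the transit set $W$ to meet every fiber and thereby supplies, in each fiber, the common point $c$ on which the symmetric-group factorization hinges. In writing this up I would be careful to check that $A$, $B$, and $W$ remain unions of cylinder sets after refining to level $M$ (automatic since $M>N$) and that every transposition used stays within one fiber, so that all factors genuinely belong to $G_M$ and hence to $\G(A)$ or $\G(B)$.
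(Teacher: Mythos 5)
Your proposal is correct and follows essentially the same route as the paper: both reduce part (2) to part (1) via the normalizer observation, establish the easy inclusion from supports, and prove the reverse inclusion by passing to a deep enough level where (by simplicity/minimality) every fiber $E(v_0,w)$ meets $X\setminus(A\cup B)$, then invoking the fact that $\sym(W')$ and $\sym(W'')$ generate $\sym(W'\cup W'')$ when $W'\cap W''\neq\emptyset$. Your write-up merely makes the fiberwise decomposition and the transposition factorization more explicit than the paper does.
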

\begin{proof} We will start by noticing that Statement (2) follows from
Statement (1). Indeed, $H\in \F(A)\cap \F(B)$ if and only if $gHg^{-1} = H$ for
every $g\in \G(A)\cup \G(B)$. Therefore, $H\in \F(A)\cap \F(B)$ if and only if
$gHg^{-1} = H$ for every $g\in \G(A\cap B)$.

To establish Part (1) of the result, we must show that the subgroups
$\G(A)$ and $\G(B)$ generate the group $\G(A\cap B)$. It will be convenient to
work with the {\it local subgroups} instead of $\G(A)$. For a clopen set $C$,
denote by $L(C)$ the set of  elements of $G$ supported by the set $C$.  Note that $\G(C) =
L(X\setminus C)$. Thus, it suffices to prove that  $ L(C\cup
D) = <L(C),L(D)> $ whenever
$C$ and $D$ are clopen sets such that $C\cap D \neq \emptyset$.

First of all, notice that if $g\in L(C)$, then $g(x) = x$ for every $x\in X\setminus C$. In particular, $g(x) = x$ for $x\in X\setminus (C\cup D)$. Hence, $g\in L(C\cup D)$ and $L(C)\subset L(C\cup D)$. Similarly, $L(D)\subset L(C\cup D)$.

Conversely, for a clopen  set $W$, denote by $L_n(W)$ the set of elements of
$G_n$ supported by $W$. Note that $L(W) = \bigcup_{n\geq 1}L_n(W)$. We observe
that for any pair of finite sets $W'$ and $W''$, the symmetrc groups
$\sym(W')$ and $\sym(W'') $ generate $\sym(W'\cup W'')$ whenever $W'\cap W''
\neq \emptyset$. By minimality of $(X,G)$,  for all
$n$ large enough and every $x\in X$, the $G_n$-orbit of $x$ intersects the sets $C\setminus (C\cap D)$, $C\cap D$, and $D\setminus (C\cap D)$. Therefore, $<L_n(C),L_n(D)> =
L_n(C\cup D)$ for all $n$ large enough. It follows that
$L(C\cup D) = <L(C),L(D)> $.
\end{proof}

\begin{lemma}\label{LemmaProperties2}  For any clopen set $A\subset X$ we have
\begin{enumerate}
\item for any $x\in X\setminus A$, the orbit $\G(A)x$ is infinite;
\item for any $H\in\sub(G)\setminus \F(A)$, the orbit $\G(A)H=\{gHg^{-1}:g\in
\G(A)\}$ is infinite.
\end{enumerate}
\end{lemma}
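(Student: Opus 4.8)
The plan is to prove each part by exhibiting explicit elements of $\G(A)$ whose conjugates (or iterates on a point) are pairwise distinct, thereby forcing the relevant orbit to be infinite. The two parts are closely related: Part (2) about the conjugation orbit in $\sub(G)$ should follow from Part (1) about the orbit of a point, by choosing a group element $h \in H$ that witnesses $H \notin \F(A)$ and tracking how $\G(A)$ moves its support.

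For Part (1), I would fix $x \in X \setminus A$ and use that $X \setminus A$ is a nonempty clopen set on which $\G(A) = L(X \setminus A)$ acts. The key observation is that the full group supported on a clopen set acts highly transitively on the cylinder structure inside that set. Concretely, since $A$ is clopen and $x \notin A$, for all large $n$ the point $x$ lies in some cylinder $U = X_v^{(n)}(\overline e) \subset X \setminus A$; by minimality of $(X,G)$ (the diagram is simple) there are infinitely many distinct cylinders inside $X\setminus A$ that lie in the same $G_m$-orbit structure, so one can produce elements of $G_m \cap L(X\setminus A) = L_m(X\setminus A)$ moving $x$ to infinitely many distinct points. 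The cleanest route is: pick a strictly increasing sequence $n_1 < n_2 < \cdots$ and, at each stage, find a finer cylinder around $x$ together with a disjoint cylinder (of the same combinatorial type, using that incidence matrices eventually connect all vertices) both contained in $X \setminus A$, then use the transposition-type element of $G_{n_j}$ swapping these two cylinders to send $x$ to a genuinely new point $x_j$. Ensuring the $x_j$ are pairwise distinct is handled by making the target cylinders shrink, so no two coincide.

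For Part (2), suppose $H \in \sub(G) \setminus \F(A)$. By definition of $\F(A)$ there exists $g_0 \in \G(A)$ with $g_0 H g_0^{-1} \neq H$, so there is an element $h \in H$ with $g_0 h g_0^{-1} \notin H$ (or vice versa). I would then show that the set $\{gHg^{-1} : g \in \G(A)\}$ is infinite by producing infinitely many $g$ that yield distinct subgroups. The mechanism is to conjugate $h$ by elements of $\G(A)$ so as to translate its support to infinitely many disjoint locations in $X \setminus A$: if $g_j h g_j^{-1}$ have pairwise disjoint supports for distinct $j$, then the subgroups $g_j H g_j^{-1}$ cannot all coincide (a single subgroup cannot contain infinitely many conjugates of $h$ supported on disjoint clopen sets while being conjugate to a fixed $H$ whose element $h$ has fixed support size). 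Here I would lean on Part (1): since $h$ moves some point $x \in X \setminus A$, the $\G(A)$-orbit of $x$ is infinite, and by choosing $g_j$ appropriately (again swapping into ever-finer, pairwise disjoint cylinders inside $X \setminus A$) the supports of $g_j h g_j^{-1}$ can be driven apart.

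I expect the main obstacle to be Part (2): passing from "the point-orbit is infinite" to "the subgroup-orbit is infinite" requires care, because distinct conjugating elements $g_j$ can a priori produce the same subgroup $g_j H g_j^{-1}$. The crux is to rule this out, i.e. to argue that if $g_j H g_j^{-1}$ stabilizes (takes only finitely many values), then $H$ would have to be invariant under a large subgroup of $\G(A)$, contradicting $H \notin \F(A)$. I would make this precise by a pigeonhole argument: if only finitely many distinct conjugates occur, then the stabilizer in $\G(A)$ of the point $H \in \sub(G)$ has finite index, hence (since $\G(A)$ is simple by Remark \ref{RemarkFFamilyIncreasing}(2), when the diagram is even) must be all of $\G(A)$, forcing $H \in \F(A)$ --- a contradiction. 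This simplicity input is the clean way to close the argument and is precisely why the even/simple hypotheses are in play.
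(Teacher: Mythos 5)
Your proposal lands on a correct argument, but only in its final paragraph, and that final paragraph is in fact the paper's entire proof. The paper disposes of both parts in three lines: $\G(A)$ is an infinite simple group, hence admits no nontrivial action on a finite set (equivalently, has no proper finite-index subgroup), so any finite orbit --- whether of a point $x\in X\setminus A$ or of a subgroup $H\notin\F(A)$ under conjugation --- would have to be a fixed point, contradicting the existence of an element of $\G(A)$ moving $x$ (resp.\ the definition of $\F(A)$). Your closing ``pigeonhole on the stabilizer, which has finite index, hence equals $\G(A)$ by simplicity'' is exactly this, and once you have it you should apply it to Part (1) as well: the explicit cylinder-swapping construction is then needed only to verify the one small point the paper leaves implicit, namely that \emph{some} element of $\G(A)$ moves $x$ (so that the orbit is not the singleton $\{x\}$); producing a single transposition of cylinders inside $X\setminus A$ suffices for that, and the bookkeeping with shrinking target cylinders to get infinitely many distinct images is unnecessary.

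One intermediate step in your Part (2) is genuinely flawed and should be deleted rather than repaired: the claim that if the conjugates $g_jhg_j^{-1}$ have pairwise disjoint supports then the subgroups $g_jHg_j^{-1}$ cannot all coincide. A subgroup of $G$ can perfectly well contain infinitely many elements with pairwise disjoint supports (indeed any $\G(B)$ does), and $H$ is an arbitrary, possibly infinitely generated, subgroup with no meaningful ``support size,'' so nothing prevents a single subgroup from absorbing all these conjugates. Fortunately your final simplicity/finite-index argument does not rely on this mechanism at all, so the proposal as a whole closes correctly --- it just carries more machinery than the statement requires.
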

\begin{proof} Since the group $\G(A)$ is infinite and simple, it does not admit non-trivial actions on finite sets.  Therefore, every orbit $\G(A)x$, $x\in X\setminus A$, is infinite. Analogously, we obtain that for every $H\in\sub(G)\setminus \F(A)$, the orbit $\G(A)H$ must be infinite.
\end{proof}

\begin{lemma}\label{LemmaDecrFam} Let $\{C_n\}_{n=1}^\infty$ be a decreasing family of closed sets and $C = \bigcap_{n\geq 1}C_n$. Then $$\bigcap_{n\geq 1}\F(C_n) = \F(C).$$
\end{lemma}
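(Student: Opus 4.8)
The plan is to prove the two inclusions separately. The inclusion $\F(C)\subseteq\bigcap_{n\geq 1}\F(C_n)$ is immediate: since the family $\{C_n\}$ is decreasing, we have $C=\bigcap_{n}C_n\subseteq C_n$ for every $n$, and so by the monotonicity recorded in Remark \ref{RemarkFFamilyIncreasing}(1) we get $\F(C)\subseteq\F(C_n)$ for each $n$, whence $\F(C)\subseteq\bigcap_n\F(C_n)$. This is the easy half, and it does not use any compactness.

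For the reverse inclusion, I would fix a subgroup $H\in\bigcap_{n}\F(C_n)$ and an arbitrary element $g\in\G(C)$, and aim to show $gHg^{-1}=H$; since $g$ is arbitrary, this yields $H\in\F(C)$. The key observation is that $g\in G=\bigcup_m G_m$, so $g\in G_m$ for some $m$, and therefore $g$ permutes only the first $m$ coordinates of each path. Consequently the condition $g(x)=x$ depends only on the initial segment $(e_1,\ldots,e_m)$ of $x$, which means that the fixed-point set $\fix(g)$ is clopen. By hypothesis $g\in\G(C)$, so $C\subseteq\fix(g)$.

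The main step is then a compactness argument producing a single index $n$ with $g\in\G(C_n)$. I would consider the sets $K_n=C_n\cap(X\setminus\fix(g))$. Each $K_n$ is closed, being the intersection of the closed set $C_n$ with the clopen, hence closed, set $X\setminus\fix(g)$, so $K_n$ is compact in the compact space $X$, and the family $\{K_n\}$ is decreasing. Its total intersection is $\bigl(\bigcap_n C_n\bigr)\cap(X\setminus\fix(g))=C\cap(X\setminus\fix(g))=\varnothing$, using $C\subseteq\fix(g)$. By the finite intersection property of compact sets, a nested family of compact sets with empty intersection must have an empty member, so $K_n=\varnothing$ for some $n$. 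This says exactly that $C_n\subseteq\fix(g)$, i.e., $g\in\G(C_n)$. Since $H\in\F(C_n)$, we conclude $gHg^{-1}=H$, as desired.

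I expect the crux of the argument to be this last compactness reduction. The containment $\bigcap_n\F(C_n)\subseteq\F(C)$ asserts that invariance under the smaller groups $\G(C_n)$ forces invariance under $\G(C)$, yet there is no inclusion between these groups to exploit directly: as $C_n$ grows, $\G(C_n)$ shrinks, and $\G(C)$ is in general contained in none of the $\G(C_n)$. What rescues the argument is that every individual element $g\in\G(C)$ is a finitely supported permutation with clopen fixed-point set, so although $\G(C)$ is not contained in any single $\G(C_n)$, each of its elements \emph{does} lie in some $\G(C_n)$. I would therefore take care to argue explicitly that $\fix(g)$ is clopen, since that is precisely the ingredient that lets the nested-compact-sets reduction close.
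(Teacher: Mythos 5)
Your proof is correct and follows essentially the same route as the paper: the easy inclusion via monotonicity of $\F$, and the reverse inclusion by observing that each $g\in\G(C)$ lies in some $G_m$ so that $\fix(g)$ is clopen, then using compactness of the nested sets $C_n\setminus\fix(g)$ to find $n$ with $C_n\subseteq\fix(g)$, i.e.\ $g\in\G(C_n)$. The paper compresses the compactness step into one line, but your expanded version is exactly the intended argument.
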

\begin{proof} The inclusion $\bigcap_{n\geq 1}\F(C_n)\supset \F(C)$ follows from Remark \ref{RemarkFFamilyIncreasing}. On the other hand, assume that $K\in\F(C_n)$ for every $n\geq 1$. Then $gKg^{-1}=K$ for every $g\in \bigcup\limits_{n\geq 1}\G(C_n)$. Let $g\in \G(C)$. Note that the set $\fix(g)\supset C$ is clopen. Therefore, by compactness of $X$ we obtain that there exists $n$ such that $C_n\subset \fix(g)$. Thus, $g\in\G(C_n)$. It follows that $\bigcup\limits_{n\geq 1}\G(C_n)=\G(C)$, which shows that $K\in \mathcal F(C)$.
\end{proof}
\begin{lemma}\label{LemmaProperties3} Let $C=\{c_1,\ldots c_m\}\subset X$ be a finite set such that the orbits $Gc_i,$ $i=1,\ldots,m$, are pairwise disjoint. Then $$\F(C) = \bigcup_{Z\subset C} \{\G(Z)\}\cup\{\{e\}\}.$$
\end{lemma}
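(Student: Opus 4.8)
The plan is to prove the two inclusions separately; the inclusion $\supseteq$ is routine, and the inclusion $\subseteq$ carries all the content. For $\supseteq$ I would check directly that $\{e\}\in\F(C)$ (trivial) and that $\G(Z)\in\F(C)$ for every $Z\subseteq C$. For the latter, fix $Z\subseteq C$ and $g\in\G(C)$: since $g$ fixes $C$ pointwise it fixes $Z$ pointwise, so $g(Z)=Z$ and therefore $g\G(Z)g^{-1}=\G(g(Z))=\G(Z)$. Thus $\G(C)$ normalizes $\G(Z)$, i.e. $\G(Z)\in\F(C)$.

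For $\subseteq$, let $H\in\F(C)$ with $H\neq\{e\}$, and set $Z=\{c\in C:\ h(c)=c\text{ for all }h\in H\}$, the common fixed points of $H$ inside $C$. By construction $H\subseteq\G(Z)$ with $Z\subseteq C$, so it suffices to prove $H=\G(Z)$. First I would record that $\G(Z)$ is simple: choosing clopen sets $U_1\supseteq U_2\supseteq\cdots$ with $U_n\neq X$ and $\bigcap_n U_n=Z$, the compactness argument of Lemma \ref{LemmaDecrFam} gives $\G(Z)=\bigcup_n\G(U_n)$ as an increasing union of groups, each simple by Remark \ref{RemarkFFamilyIncreasing}(2); a nontrivial normal subgroup of $\G(Z)$ would then meet, and hence by simplicity contain, every $\G(U_n)$ from some index on, so $\G(Z)$ is simple. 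The whole argument now reduces to the equality $\langle\G(C),H\rangle=\G(Z)$: granting it, $H$ is normalized by $\G(C)$ (as $H\in\F(C)$) and by itself, hence $H$ is normal in $\langle\G(C),H\rangle=\G(Z)$, and simplicity of $\G(Z)$ together with $H\neq\{e\}$ forces $H=\G(Z)$.

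To establish $\langle\G(C),H\rangle=\G(Z)$, for each $c\in C\setminus Z$ pick $h_c\in H$ with $h_c(c)\neq c$. Since the orbits $Gc_i$ are pairwise disjoint and $h_c(c)$ lies in the orbit of $c$, we get $h_c(c)\notin C$, while $h_c$ fixes $Z$. Put $K=\langle\G(C),H\rangle$, so each conjugate $\G(h_cC)=h_c\G(C)h_c^{-1}$ lies in $K$. I then claim
\[
C\cap\bigcap_{c\in C\setminus Z}h_cC=Z .
\]
The left side clearly contains $Z$; conversely, for $d\in C\setminus Z$ the relation $d\in h_dC$ would give $h_d^{-1}(d)\in C$, and as $h_d^{-1}(d)$ lies in the orbit of $d$ this forces $h_d^{-1}(d)=d$, i.e. $h_d(d)=d$, contradicting the choice of $h_d$; so $d$ is excluded and the intersection is exactly $Z$. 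Using a finite-set analogue of Lemma \ref{LemmaProperties1}(1), namely $\langle\G(A_1),\dots,\G(A_r)\rangle=\G(A_1\cap\cdots\cap A_r)$ for finite sets, applied to $C$ and the sets $h_cC$, we obtain $\G(Z)\subseteq K$, and since $K\subseteq\G(Z)$ is obvious, $K=\G(Z)$.

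The main obstacle is precisely this finite-set generation lemma, since the complement of a finite set is open but not clopen, so the local-subgroup argument of Lemma \ref{LemmaProperties1} does not transfer verbatim. I would instead prove it at a fixed level: choosing $n$ so large that the points of $A\cup B$ have distinct level-$n$ initial segments and that every vertex $v\in V_n$ carries a path outside $A\cup B$ (possible by simplicity of $B$, as $h_v^{(n)}\to\infty$), one has $\G(A)\cap G_n=\prod_{v\in V_n}\sym(\Omega_v\setminus P_v^A)$, where $\Omega_v=E(v_0,v)$ and $P_v^A$ is the set of $A$-paths through $v$, and similarly for $B$ and $A\cap B$. At each vertex $(\Omega_v\setminus P_v^A)\cup(\Omega_v\setminus P_v^B)=\Omega_v\setminus(P_v^A\cap P_v^B)$ while $(\Omega_v\setminus P_v^A)\cap(\Omega_v\setminus P_v^B)=\Omega_v\setminus(P_v^A\cup P_v^B)\neq\varnothing$, so the two symmetric groups generate $\sym(\Omega_v\setminus(P_v^A\cap P_v^B))$ exactly as in the proof of Lemma \ref{LemmaProperties1}. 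Taking the product over $v$ and the union over $n$ yields $\langle\G(A),\G(B)\rangle=\G(A\cap B)$, and iterating gives the multi-set version used above.
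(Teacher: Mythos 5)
Your proof is correct, and while it ends in the same place as the paper's --- $H$ is a nontrivial normal subgroup of the simple group $\G(Z)$, where $Z$ is the set of points of $C$ fixed by all of $H$, hence $H=\G(Z)$ --- the way you obtain normality of $H$ in $\G(Z)$ is genuinely different. The paper works inside the subgroup itself: from elements $k_i$ with $k_i(c_i)\neq c_i$ it assembles, by conjugating with elements of $\G(C)$ and multiplying, a single element of the subgroup displacing all of $C\setminus Z$, and then argues that every $g\in\G(Z)$ factors as $g=g_0h$ with $g_0\in\G(C)$ and $h$ in the subgroup; the resulting product decomposition $\G(Z)=\G(C)\cdot H$ gives normality. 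You instead conjugate $\G(C)$ by elements $h_c\in H$ and show that $\G(C)$ together with the conjugates $\G(h_cC)=h_c\G(C)h_c^{-1}$ generates $\G(Z)$, via the identity $C\cap\bigcap_c h_cC=Z$ (which is where the orbit-disjointness hypothesis enters, exactly as in the paper) and a finite-set analogue of Lemma \ref{LemmaProperties1}(1); normality then follows simply because the normalizer of $H$ is a subgroup containing both $\G(C)$ and $H$. The price of your route is that generation lemma for finite rather than clopen sets, which you prove correctly by the level-$n$ computation in $G_n=\prod_{v\in V_n}\sym(E(v_0,v))$; what it buys is that two points the paper leaves compressed become explicit: the ``additionally conjugating $k$ \dots\ we can show that for every $g\in G$ there exists $h\in K$ with $g(C)=h(C)$'' step is replaced by a concrete generation statement, and the simplicity of $\G(Z)$ for finite $Z$ (which the paper invokes without comment) is derived from Remark \ref{RemarkFFamilyIncreasing}(2) and the compactness argument of Lemma \ref{LemmaDecrFam} via $\G(Z)=\bigcup_n\G(U_n)$. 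Both arguments are sound; yours is somewhat longer but more self-contained.
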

 \begin{proof} Let $K$ be a subgroup of $G$ such that $g Kg^{-1} = K$ for every  $g\in \G(C)$. First, assume that for every $i=1,\ldots,m$, there exists $k_i\in K$ such that $k_i(c_i) \neq c_i$.
In what follows, it will be convenient to  write  $b^a$ for $aba^{-1}$.   Set $g_m = 1$. Find $g_{m-1}\in \G(C)$ such that $k_{m-1}^{g_{m-1}}k_m^{g_m}(c_l)\neq c_l$ for $l=m-1,m$. Proceeding by induction, we can find a sequence of elements $g_m,g_{m-1},\ldots,g_1$ from $\G(C)$ such that $$k_1^{g_1}\cdots k_m^{g_m}(c_l)\neq c_l\mbox{ for }l=1,\ldots,m.$$

Set $k = k_1^{g_1}\cdots k_m^{g_m}$.  Then $k\in K$ and $k(c_i)\neq c_i$ for every $i=1,\ldots,m$. Additionally conjugating  $k$ by elements of $\G(C)$ and multiplying by $k^{-1}$ if needed, we can show that for every $g\in G$ there exists  $h\in K$ such that $g(C) = h(C)$. Therefore, $g = g_0h$, where $g_0\in \G(C)$. Thus, for every $q\in K$, $$g q g^{-1} = g_0h q h^{-1} g_0^{-1} \in K,$$ which shows that $K$ is a normal subgroup of $G$. By simplicity of $G$, we conclude that $K = G$ or $K = \{e\}$.

Now, if the group $K$ stabilizes some points in $C$, we can find the largest subset $Z\subset C$ with $K\subset \G(Z)$. Using the previous argument and the fact the group $\G(Z)$ is simple,  we prove that $K = \G(Z)$.
\end{proof}

In \cite[Theorem 2.4]{ThomasTuckerDrob:2016} Thomas and Tucker-Drob observed that every ergodic action of a locally finite infinite simple group must be weakly mixing. Their idea was to use the fact that if an action of such a group $G$ is not weakly mixing, then the Koopman representation has a finite-dimensional invariant subspace, which implies that the simple group under consideration admits a finite-dimensional faithful unitary representation. Then by the Jordan-Schur theorem the group $G$ must be virtually Abelian, which contradicts the simplicity of $G$. We would like to note that the class of groups for which ergodicity of an action automatically implies weak mixing also contains certain Lie groups \cite{CreutzPeterson:2017}.

 We recall that one of the equivalent characterizations of weak mixing for a measure-preserving system system $(X,\mu,G)$ is the condition that if $(Y,\nu,G)$ is an ergodic system, then the product system $(X\times Y,\mu\times \nu, G)$ is also ergodic. This leads to the following result.

\begin{proposition}\label{PropositionErgodicityProductMeasures}  Let $G$ be the AF full group of a simple Bratteli diagram $B$. Suppose that $X$ is the path-space of $B$ and
  $\{\nu_1,\ldots,\nu_m\}$ is a collection of ergodic $G$-invariant measures on $X$, not necessarily distinct.  Then the diagonal action of $G$ on $(X^m,\nu)$, $\nu = \nu_1\times \cdots \times \nu_m$, is ergodic.
\end{proposition}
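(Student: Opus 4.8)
The plan is to deduce ergodicity of the diagonal action from weak mixing, after first passing to a simple subgroup of $G$ so that the Thomas--Tucker-Drob theorem applies. Since $B$ is simple, the commutator subgroup $H=[G,G]$ is a simple group by the Remark following Definition~\ref{DefinitionAFFullGroup}; moreover $H$ is infinite (the heights $h_v^{(n)}$ grow without bound along a simple diagram, so the alternating factors of $[G_n,G_n]$ are eventually nontrivial) and locally finite, being a subgroup of the locally finite group $G$. Thus $H$ is an infinite, simple, locally finite group, so every ergodic action of $H$ is weakly mixing.

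The first step is to check that each $\nu_i$ is not merely $G$-ergodic but $H$-ergodic. I would prove this by showing that $H$ and $G$ induce the same orbit equivalence relation on $X$, namely the tail (cofinality) relation. Given two tail-equivalent paths $x,y$ agreeing beyond some level, one has $r(x_n)=r(y_n)=:v$ for all large $n$, and a permutation of $E(v_0,v)$ carrying the initial segment of $x$ to that of $y$ produces an element of $G_v^{(n)}$ mapping $x$ to $y$. Choosing $n$ large enough that $h_v^{(n)}\ge 3$, one may always select such a permutation inside $\sym(E(v_0,v))$ to be \emph{even} (replace a transposition by a suitable $3$-cycle), so that it lies in $[G_n,G_n]\subset H$. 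Consequently the $H$-orbits coincide with the $G$-orbits, the $H$-invariant and $G$-invariant Borel sets agree, and each $\nu_i$ remains ergodic for $H$; moreover $\nu$ is $H$-invariant on $X^m$ since $H\le G$.

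With this in hand, the conclusion follows by induction on $m$ using the stated characterization of weak mixing. Write $\nu^{(j)}=\nu_1\times\cdots\times\nu_j$ on $X^j$. The case $j=1$ is the hypothesis that $(X,\nu_1,H)$ is ergodic. Assuming $(X^{j},\nu^{(j)},H)$ is ergodic, note that $(X,\nu_{j+1},H)$ is weakly mixing by the Thomas--Tucker-Drob theorem, so its product with the ergodic system $(X^{j},\nu^{(j)},H)$ is ergodic; after permuting coordinates this is exactly $(X^{j+1},\nu^{(j+1)},H)$. Hence $(X^m,\nu,H)$ is ergodic, and since $H\le G$ every $G$-invariant set is $H$-invariant, so $(X^m,\nu,G)$ is ergodic as well.

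The step I expect to be the main obstacle is the reduction to $H=[G,G]$. One cannot invoke the weak mixing theorem for $G$ directly: $G$ need not be simple (the diagram is only assumed simple, not even), and indeed $G$ surjects onto the abelian group $G/[G,G]$, producing finite-dimensional representations, so $G$-ergodic actions need not be weakly mixing. The substance of the argument is therefore the verification that passing from $G$ to its simple commutator subgroup preserves ergodicity of each $\nu_i$, which is precisely where the growth of the heights and the availability of even permutations enter.
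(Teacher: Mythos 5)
Your proof is correct and takes essentially the same route as the paper's: pass to the simple commutator subgroup $[G,G]$, observe that it generates the same orbit equivalence relation as $G$ (so each $\nu_i$ stays ergodic), and then invoke the Thomas--Tucker-Drob weak mixing theorem to get ergodicity of the product. The paper merely asserts the equality of the orbit relations and leaves the product-induction implicit; the even-permutation argument and the induction on $m$ are exactly the details you supply.
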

\begin{proof} Note that the commutator subgroup $G'$ of $G$ is simple. We notice that the actions of $G'$ and $G$ on $(X^m,\nu)$ give rise to the same orbit equivalence relations. Therefore, they are either both ergodic or both non-ergodic. Since the action of $G$ on $(X,\nu_i)$ is ergodic for each $i$ it follows that the action of $G'$ on $(X,\nu_i)$ is ergodic and, moreover, weakly mixing for each $i$. Using Thomas and Tucker-Drob's result, we obtain that the action of $G'$ on $(X^m,\nu)$ is ergodic.
\end{proof}

\begin{proposition}\label{PropositionErgodicAverage} Let $H=\bigcup_{n\geq 1} H_n$ be  an
increasing sequence of finite groups acting on a
measure space $(Y,\nu)$. Assume that the orbit $H\cdot y$ of
$\nu$-almost every $y\in Y$ is infinite.  Then
\begin{equation}\lim\limits_{n\to\infty}\frac{1}{|H_n|}\sum\limits_{h\in
H_n}\nu(\fix_Y(h)) = 0\end{equation} and the pre-limit sequence is monotone.
\end{proposition}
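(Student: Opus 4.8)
The plan is to prove the statement by identifying the Cesàro average as the measure of the set of points with small orbit under the finite group $H_n$, and then showing this set shrinks to a null set. The key observation is the standard Burnside/orbit-counting identity: for a finite group $H_n$ acting on $(Y,\nu)$ with $\nu$ invariant under $H_n$, we have
\begin{equation}\label{eq:burnside}
\frac{1}{|H_n|}\sum_{h\in H_n}\nu(\fix_Y(h)) = \int_Y \frac{1}{|H_n \cdot y|}\,\dd\nu(y),
\end{equation}
where $H_n\cdot y$ denotes the $H_n$-orbit of $y$. To see \eqref{eq:burnside}, I would apply Fubini to the indicator of the set $\{(h,y) : h(y)=y\}$: summing over $h$ first gives $\sum_h \nu(\fix_Y(h))$, while integrating over $y$ first gives $\int_Y |\stab_{H_n}(y)|\,\dd\nu(y)$, and the orbit-stabilizer relation $|\stab_{H_n}(y)| = |H_n|/|H_n\cdot y|$ converts this into the right-hand side after dividing by $|H_n|$. (The invariance of $\nu$ under $H_n$ is what lets me treat $|\stab_{H_n}(y)|$ as the integrand consistently.)

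Granting \eqref{eq:burnside}, I would then analyze the pointwise integrand $f_n(y) := 1/|H_n\cdot y|$. Since $H_1\subset H_2\subset\cdots$, the orbits are nested, $H_n\cdot y \subset H_{n+1}\cdot y$, so $|H_n\cdot y|$ is nondecreasing in $n$ and hence $f_n(y)$ is nonincreasing in $n$. This immediately yields the asserted monotonicity of the pre-limit sequence, once I push the monotonicity through the integral. For the limit itself, I observe that $\bigcup_n (H_n\cdot y) = H\cdot y$, so for $\nu$-almost every $y$ — precisely those with infinite $H$-orbit, which by hypothesis is a full-measure set — we have $|H_n\cdot y|\to\infty$ and therefore $f_n(y)\to 0$ pointwise a.e. Since $0\le f_n \le 1$, the dominated convergence theorem (dominating function the constant $1$, which is integrable on the probability space $(Y,\nu)$) gives
\begin{equation}
\lim_{n\to\infty}\frac{1}{|H_n|}\sum_{h\in H_n}\nu(\fix_Y(h)) = \lim_{n\to\infty}\int_Y f_n\,\dd\nu = \int_Y \lim_{n\to\infty} f_n\,\dd\nu = 0.
\end{equation}

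The main point requiring care is the justification of \eqref{eq:burnside} together with the measurability of the maps involved: I must check that $y\mapsto |H_n\cdot y|$ (equivalently $y\mapsto |\stab_{H_n}(y)|$) is a genuine measurable function and that each $\fix_Y(h)$ is measurable, so that Fubini applies to the finite sum. In the present setting $Y$ is a standard Borel space (a finite product of path-spaces) and each $h\in H_n$ acts as a homeomorphism, so every $\fix_Y(h)$ is closed, hence Borel, and the stabilizer size is a finite sum of indicator functions of such sets and therefore measurable; this is routine but should be noted. I do not anticipate a genuine obstacle here — the only subtlety is to confirm that the hypothesis ``$H\cdot y$ infinite for a.e.\ $y$'' is exactly what forces $f_n\to 0$ a.e., rather than merely in measure, so that dominated convergence delivers the clean limit $0$ and not just an inequality.
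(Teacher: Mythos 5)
Your proof is correct, but it takes a genuinely different and more elementary route than the paper's. You use the Burnside/orbit-counting identity
\[
\frac{1}{|H_n|}\sum_{h\in H_n}\nu(\fix_Y(h)) \;=\; \int_Y \frac{1}{|H_n\cdot y|}\,\dd\nu(y),
\]
which follows from exchanging a \emph{finite} sum with the integral and the pointwise algebraic identity $|\stab_{H_n}(y)|\cdot|H_n\cdot y|=|H_n|$ (note that this step does not actually require invariance of $\nu$, contrary to your parenthetical remark --- orbit--stabilizer is applied pointwise at each $y$); then nestedness of orbits gives pointwise monotonicity of the integrand and the infinite-orbit hypothesis gives pointwise convergence to $0$ almost everywhere, so dominated convergence on the finite measure space finishes the argument. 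The paper instead runs the groupoid construction: it builds the orbit equivalence relation $R$ with its counting measure, the associated unitary representation $\pi$ of $H$ on $L^2(R,\overline\nu)$, and the decreasing sequence of averaging projections $P_n=\frac{1}{|H_n|}\sum_{h\in H_n}\pi(h)$, which converge strongly to a projection $P$; it then shows $P=0$ because a nonzero $\pi(H)$-invariant vector would have to take a fixed nonzero value at infinitely many points of almost every orbit fiber, contradicting square-integrability. Your argument is shorter, self-contained, and makes the monotonicity transparent at the level of the integrand; the paper's Hilbert-space argument is heavier but sits naturally alongside the character-theoretic and representation-theoretic machinery used elsewhere in the paper. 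The only points you should make explicit are that measurability of each $\fix_Y(h)$ is already implicit in the statement (so Fubini for the finite sum is unproblematic) and that $\nu$ must be a finite measure for the constant dominating function to be integrable, which is the case in all applications in the paper.
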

\begin{proof} Consider the space $$R = \{(y, h\cdot y) :y\in Y,\;
h\in H\}\subset X\times X.$$ Let $\overline \nu$ be the ``counting measure'' on $R$, that is,
for a measurable set $A\subset R$ we have that $$\overline \nu(A) = \int_{Y}
\textrm{card}(A_y)d\nu(y).$$ Here, $A_y = A\cap \{(y,y') \in R\}$. Set
$\mathcal H = L^2(R,\overline \nu)$ and $\pi(h) = f(h^{-1} x,y)$ for every
$h\in H$ and $f\in \mathcal H$. Then $\pi$ is a unitary representation of
$H$ on the Hilbert space $\mathcal H$. The above construction of the representation $\pi$ is often referred to as the \emph{groupoid construction}.

Set
$$P_n=\frac{1}{|H_n|}\sum\limits_{h\in H_n}\pi(h).$$ Note that $P_n^* = P_n$ and
$P_n^2 = P_n$, that is, $P_n$ is an orthogonal projection. Furthermore, since $H_n\subset H_m$ for $m\geq n>0$, we get that $P_nP_m
= P_m P_n = P_m$ whenever $m\geq n$. In other words, $\{P_n\}_{n\geq 1}$ is a
decreasing sequence of projectors.

It follows that the sequence $\{P_n\}_{n=1}^\infty$ converges in the strong operator
topology to some orthogonal projection $P$, see, for example,  \cite[Corollary 2.5.7]{KadisonRingrose:VolI}.  To establish the result, it suffices to prove that $P = 0$.

Notice that for every $x\in \mathcal H$ and $h\in H$ and for all $n$ large enough, we have that
$$ \|\pi(h)Px- P_nx\|  =  \|\pi(h)Px-\pi(h)P_nx\| =\|Px-P_nx\|\to 0\mbox{ as }n\to\infty.$$
 Therefore, $\pi(h)P=P$ for all $h\in H$.  Assume there exists $\eta\in \mathcal H,\eta\neq 0$, such that $P\eta=\eta$. Then,
$\pi(h)\eta=\eta$ for every $h\in H$. Recall that
$$(\pi(h)\eta)(x,y)=\eta(h^{-1}x,y).$$ In particular, we obtain that for $\overline \nu$-almost all $(x,y)\in R$ if $\eta(x,y)\neq 0$, then $\eta(y,y)\neq 0$ and  there exist infinitely many $z$ such that $\eta(z,y)=\eta(y,y)$. Fix  $Z\subset Y$, $\nu(Z)>0$, such that $\eta(z,z)>0$ for every $z\in Z$.    Denote by $[H]$ the group of $\nu$-preserving transformations of $Y$ that preserve the equivalence relation $R$, the so-called {\it full group of $R$}. Using the standard arguments, we can find an infinite sequence of elements $\{h_n\}\subset [H]$ such that $h_n^{-1}z\neq h_m^{-1}z$ for almost every $z\in Z$ and $n\neq m$, see, for example, \cite[Theorem 3.5]{Kechris:BookGlobalAspects}. Set $Z_n = $ $\{(h_n^{-1}z,z) | z\in Z\}$. Note that $\bar\nu(Z_n) = \nu(Z)>0$ and $\bar\nu(Z_n\cap Z_m) = 0$ whenever $n\geq m$.   It follows from the definition of $\mathcal H$ that
\begin{eqnarray*}
  \|\eta\|_2^2  & \geq  & \sum_{n\geq 1} \int_{Z_n}|\eta(x,y)|^2d\bar\nu(x,y) \\
   &=& \sum_{n\geq 1} \int_{Z}|\eta(h_n^{-1}z,z)|^2d\nu(z) \\
   &=& \sum_{n\geq 1} \int_{Z}|\eta(z,z)|^2d\nu(z) \\
   & = & \infty.
\end{eqnarray*}
This contradiction shows that $P=0$.
\end{proof}

The following result is an immediate application of Proposition \ref{PropositionErgodicAverage} and covers the case of locally finite groups $G = \bigcup_{n\geq 1}G_n$ coming from Bratteli diagrams.

\begin{corollary}\label{corollaryLimitFixedPoints} Let $G$ be an AF-full group with the path-space $X$ and let $A$ be a clopen subset of $X$.
\begin{enumerate}
\item For every $k\geq 0$ and every  $\bar\mu$ on $X^k$ invariant under the diagonal action of $G$, we have that
$$\lim_{n\to\infty}\frac{1}{|\G_n(A)|}\sum\limits_{g\in \G_n(A)}\bar\mu(\fix_{X^k}(g)) = \bar\mu(A^k).$$
\item For every IRS $\varphi$, we have that
$$\lim_{n\to\infty}\frac{1}{|\G_n(A)|}\sum\limits_{g\in \G_n(A)}\varphi(\fix_{\sub(G)}(g)) = \varphi(\F(A)).$$
\end{enumerate}
  The convergence in each case is monotone.
\end{corollary}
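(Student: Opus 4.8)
The plan is to deduce Corollary \ref{corollaryLimitFixedPoints} directly from Proposition \ref{PropositionErgodicAverage} by taking the increasing sequence of finite groups to be $H_n = \G_n(A)$, the elements of $G_n$ fixing $A$ pointwise, so that $H = \bigcup_{n\geq 1} H_n = \G(A)$. For Part (1) I would apply the proposition to the action of $\G(A)$ on the measure space $(Y,\nu) = (X^k,\bar\mu)$, and for Part (2) to the action of $\G(A)$ on $(Y,\nu) = (\sub(G),\varphi)$ by conjugation. In each case the conclusion of Proposition \ref{PropositionErgodicAverage} gives that a certain Ces\`aro average of $\nu(\fix_Y(h))$ tends to $0$; the task is to reconcile this with the claimed limits $\bar\mu(A^k)$ and $\varphi(\F(A))$, which are generally nonzero.

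The key observation is that the proposition applies not to the whole space but to the complement of the set of points whose orbit is \emph{finite}, and under the action of $\G(A)$ the finite orbits are exactly the fixed points of the whole group $\G(A)$. For Part (1), every element of $\G(A)$ fixes $A$ pointwise, so it fixes $A^k\subset X^k$ pointwise; thus $A^k\subset \fix_{X^k}(g)$ for every $g\in \G_n(A)$ and every $n$. I would therefore split $X^k = A^k \sqcup (X^k\setminus A^k)$ and write $\bar\mu(\fix_{X^k}(g)) = \bar\mu(A^k) + \bar\mu(\fix_{X^k}(g)\cap (X^k\setminus A^k))$. The constant term $\bar\mu(A^k)$ survives the averaging untouched, while the second term is handled by Proposition \ref{PropositionErgodicAverage} applied to the restricted action of $\G(A)$ on $(X^k\setminus A^k, \bar\mu)$: by Lemma \ref{LemmaProperties2}(1) every orbit $\G(A)x$ with $x\in X\setminus A$ is infinite, and the diagonal orbit of a point in $X^k\setminus A^k$ inherits infiniteness because at least one coordinate lies outside $A$. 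Hence the averaged second term tends to $0$, giving the limit $\bar\mu(A^k)$. Part (2) is entirely parallel: the conjugation action of $\G(A)$ fixes exactly the subgroups $H$ with $gHg^{-1}=H$ for all $g\in \G(A)$, which is precisely $\F(A)$ by definition, and Lemma \ref{LemmaProperties2}(2) guarantees that every $H\in \sub(G)\setminus \F(A)$ has infinite conjugation orbit. So I split $\sub(G) = \F(A)\sqcup(\sub(G)\setminus \F(A))$, the $\F(A)$ contribution yields the constant $\varphi(\F(A))$, and the complement contribution vanishes in the limit.

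The monotonicity assertion transfers directly from the proposition, since the pre-limit sequences there are monotone; adding a constant (the term $\bar\mu(A^k)$ or $\varphi(\F(A))$) preserves monotonicity, so the full averages are monotone as well. The only genuine point requiring care — the step I would treat as the main obstacle — is verifying the hypotheses of Proposition \ref{PropositionErgodicAverage} on the restricted spaces: namely that $\G(A)$-almost every point of $X^k\setminus A^k$ (resp. $\sub(G)\setminus \F(A)$) has infinite orbit, and that the restriction of the measure is well-behaved under the group action. The orbit-infiniteness is exactly the content of Lemma \ref{LemmaProperties2}, but one must note that $X^k\setminus A^k$ and $\sub(G)\setminus \F(A)$ are $\G(A)$-invariant sets (the former because $\G(A)$ fixes $A^k$ pointwise and hence preserves its complement, the latter because conjugation preserves the defining condition of $\F(A)$), so the measures restrict to $\G(A)$-invariant measures on these invariant Borel sets and the proposition genuinely applies.
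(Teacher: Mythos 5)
Your proposal is correct and follows essentially the same route as the paper: decompose $X^k = A^k \sqcup (X^k\setminus A^k)$ (resp. $\sub(G) = \F(A)\sqcup(\sub(G)\setminus\F(A))$), note the first piece contributes the constant term, and kill the second piece via Proposition \ref{PropositionErgodicAverage} together with the orbit-infiniteness from Lemma \ref{LemmaProperties2}. Your added remarks on invariance of the two pieces and on the diagonal orbit being infinite when one coordinate lies outside $A$ are exactly the (implicit) justifications the paper relies on.
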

\begin{proof} Note that for every clopen set $A\subset X$, the sets $A^k$ and $X^k\setminus A^k$ are invariant under the action of the subgroup $\G(A)$ and that every $\G(A)$-orbit in $X^k\setminus A^k$ is infinite, see Lemma \ref{LemmaProperties2}. Thus, using Proposition \ref{PropositionErgodicAverage} we obtain that
\begin{multline*}\frac{1}{|G_n(A)|}\sum\limits_{g\in G_n(A)}\bar\mu(\fix_{X^k}(g)) = \frac{1}{|G_n(A)|}\sum\limits_{g\in G_n(A)}\big(\bar\mu(\fix_{A^k}(g))+\bar\mu(\fix_{X^k\setminus A^k}(g))\big)
\\
= \bar\mu(A^k) + \frac{1}{|G_n(A)|}\sum\limits_{g\in G_n(A)}\bar\mu(\fix_{X^k\setminus A^k}(g)) \to \bar\mu(A^k). \end{multline*}
The second assertion can be  established using similar arguments. We leave the details to the reader.
\end{proof}

\section{Proof of the main result}\label{SectionMainResult}

%
%
%
This section is devoted to the proof of Theorem \ref{TheoremMainIntro}.  Throughout the section we assume that $G$ is a simple AF full group and $X$ is the path-space of the associated Bratteli diagram. Additionally, we  assume that the dynamical system $(X,G)$ admits only a finite number of ergodic measures $\{\mu_1,\ldots,\mu_k\}$, where  $1\leq k <\infty$.

 Set $\Omega = \sub(G)$. First of all, we notice that according to Proposition \ref{PropositionErgodicityProductMeasures} for every $\alpha\in \mathbb Z_{\geq 0}^k$, the measure $\mu_\alpha$ on $X^{|\alpha|}$ is ergodic under the diagonal action of $G$. Therefore, the corresponding stabilizer distribution $\varphi_\alpha$ on $\Omega$  is an ergodic IRS.

 Conversely, fix an ergodic IRS $\varphi$ of $G$.
%
If $\varphi$ has atoms, then by ergodicity, the measure $\varphi$ is supported by a normal subgroup of $G$, which, in view of simplicity of $G$, implies that $\varphi = \delta_{\{e\}}$ or $\varphi = \delta_{G}$. In what follows we  assume that the measure $\varphi$ has no atoms.
For every $g\in G$ define $\chi(g)=\varphi(\fix_\Omega(g))$. Then  the function $\chi: G\rightarrow \mathbb C$ is a character on $G$, see, for example, \cite{Vershik:2010}. Using the description of indecomposable characters for $G$ obtained by the authors in  \cite{DudkoMedynets:2013}, we conclude that there exists a collection of nonnegative real numbers $c_\alpha,\alpha\in\mathbb Z_{\geq 0}^k$, that adds up to 1 and such that
$$\chi=\sum\limits_{\alpha\in \mathbb Z_{\geq 0}^k}c_\alpha\chi_\alpha+c_\infty\chi_\reg, $$ where
$$\chi_\alpha(g)=\mu_\alpha(\fix_{X^{|\alpha|}}(g))=
\prod\limits_{i=1}^k\mu_i(\fix_X(g))^{\alpha_i}$$ is the character corresponding to $\varphi_\alpha$ and $\chi_\reg$ is the regular character of $G$.

\begin{lemma}\label{LemmaCinfty}  There exists $\alpha\in \mathbb Z^k_{\geq 0}$ such that  $c_\alpha>0$.\end{lemma}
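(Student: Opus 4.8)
The goal is to show that the coefficient $c_\infty$ cannot capture all the mass; equivalently, that $\sum_\alpha c_\alpha > 0$, which forces some $c_\alpha > 0$. Since all coefficients are nonnegative and $\sum_\alpha c_\alpha + c_\infty = 1$, it suffices to rule out the degenerate possibility $c_\infty = 1$, i.e. $\chi = \chi_{\reg}$. Recall that the regular character satisfies $\chi_{\reg}(e) = 1$ and $\chi_{\reg}(g) = 0$ for every $g \neq e$. Under the IRS-to-character correspondence $\chi(g) = \varphi(\fix_\Omega(g))$, the claim $c_\infty = 1$ translates into the statement that $\varphi(\fix_\Omega(g)) = 0$ for every nontrivial $g \in G$, which means $\varphi$-almost every subgroup $H$ fails to be normalized by every nontrivial group element. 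My plan is to derive a contradiction from this.

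\medskip

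The plan is to exploit Corollary \ref{corollaryLimitFixedPoints}(2) together with the nonatomicity of $\varphi$. Fix a clopen set $A \subsetneq X$ and apply the corollary: the Cesàro averages of $\varphi(\fix_\Omega(g))$ over $g \in \G_n(A)$ converge monotonically to $\varphi(\F(A))$. If $\chi = \chi_{\reg}$, then $\varphi(\fix_\Omega(g)) = 0$ for all $g \neq e$, so each average equals $\tfrac{1}{|\G_n(A)|}\varphi(\fix_\Omega(e)) = \tfrac{1}{|\G_n(A)|}$, which tends to $0$ since $\G(A)$ is infinite. Hence $\varphi(\F(A)) = 0$ for every proper clopen $A$. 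First I would use this to understand $\varphi$ on the increasing family $\F(A)$: as $A$ shrinks, Lemma \ref{LemmaDecrFam} lets me pass to intersections, while as $A$ grows toward $X$ the sets $\F(A)$ exhaust larger and larger portions of $\sub(G)$. The aim is to show that $\varphi$-almost every subgroup $H$ lies in some $\F(A)$ with $A \neq X$, forcing $\varphi$ to be the point mass at a normal subgroup and contradicting nonatomicity.

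\medskip

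Concretely, the key step is to show that a $\varphi$-generic subgroup $H$ has nontrivial normalizer, i.e. lies in $\fix_\Omega(g)$ for some $g \neq e$, which would immediately give $\chi(g) = \varphi(\fix_\Omega(g)) > 0$ and hence $\chi \neq \chi_{\reg}$. Since $\varphi$ is a nonatomic, ergodic, non-Dirac IRS, it is supported on subgroups that are neither trivial nor all of $G$; I would argue that for $\varphi$-almost every $H$ there is a clopen set $A \neq X$ on which $\G(A)$ normalizes $H$, i.e. $H \in \F(A)$. Indeed, a $\varphi$-generic $H$ is a proper nontrivial subgroup, and the structure of $G$ as a full group means that the normalizer of $H$ should contain $\G(A)$ for a suitable clopen $A \subsetneq X$ determined by the ``support'' of $H$; combining this with the monotone exhaustion $\bigcup_{A \neq X}\F(A)$ and countable additivity gives $\varphi\big(\bigcup_{A \neq X} \F(A)\big) > 0$, contradicting $\varphi(\F(A)) = 0$ for all proper $A$ established above.

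\medskip

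The main obstacle I anticipate is the middle step: making precise why a $\varphi$-generic subgroup $H$ is normalized by some $\G(A)$ with $A$ a proper clopen set, so that $H \in \F(A)$. The trivial subgroup $\{e\}$ lies in every $\F(A)$, and one must avoid the trap of the argument collapsing onto the atom $\delta_{\{e\}}$, which is precisely why nonatomicity of $\varphi$ is essential. I would look for a dichotomy: either $\varphi(\fix_\Omega(g)) > 0$ for some $g \neq e$ (done), or $\varphi$ is concentrated on subgroups $H$ whose normalizer meets no $\G(A)$ nontrivially, and then show the latter forces $\varphi = \delta_{\{e\}}$, contradicting the standing assumption that $\varphi$ is non-Dirac and nonatomic. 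The cleanest route may be purely through the character identity: since $\chi$ is continuous and $\chi(e)=1$, and the $\chi_\alpha$ together with $\chi_{\reg}$ form the extreme points, the assumption $c_\infty=1$ would make $\chi$ vanish off the identity, and I would test this against Corollary \ref{corollaryLimitFixedPoints}(2) for a fixed proper clopen $A$ to conclude $\varphi(\F(A))=0$; then a separate argument showing $\varphi(\F(A))>0$ for $A$ small enough delivers the contradiction.
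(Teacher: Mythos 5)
You correctly reduce the lemma to ruling out $c_\infty = 1$, i.e.\ $\chi = \chi_\reg$, and your first deduction --- that under this assumption Corollary \ref{corollaryLimitFixedPoints}(2) forces $\varphi(\F(A)) = 0$ for every proper clopen $A$ --- is sound. But the pivotal middle step, that $\varphi$-almost every subgroup $H$ is normalized by $\G(A)$ for some proper clopen $A$ (equivalently, lies in $\bigcup_{A\neq X}\F(A)$), is asserted rather than proved, and you acknowledge as much. That claim is essentially the structural heart of the entire paper: showing that $\varphi$-generic subgroups are stabilizers $\G(\bar x)$ is exactly what Lemmas \ref{LemmaPhiFA}--\ref{LemmaPhiXr-Zr} are assembled to prove, and that machinery itself relies on having some $c_\alpha>0$ (via Corollary \ref{CoPhiLowerEstimates}). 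So your route is circular as it stands. Nor is the claim obvious on its own: for an arbitrary nonatomic IRS there is no a priori reason its generic subgroup should be normalized by an entire local subgroup $\G(A)$; this works for subgroups generated by elements with nonempty interior of fixed points, but a generic $H$ need not be of that form before the theorem is proved.

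The paper's proof sidesteps all of this with one observation you did not use: if $g \in H$ then $gHg^{-1}=H$, so the set $\mathcal H_g=\{H\in\Omega : g\in H\}$ is contained in $\fix_\Omega(g)$. Under the assumption $c_\infty=1$ this gives $\varphi(\mathcal H_g)\leq \varphi(\fix_\Omega(g))=\chi_\reg(g)=0$ for every $g\neq e$, and since $G$ is countable, $\bigcup_{g\neq e}\mathcal H_g=\Omega\setminus\{\{e\}\}$ is a countable union of $\varphi$-null sets. Hence $\varphi=\delta_{\{e\}}$, contradicting the standing assumption that $\varphi$ has no atoms. No fixed-point averaging, no analysis of $\F(A)$, and no normalizer structure are needed; the argument is three lines. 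I would replace your second and third paragraphs with this observation.
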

 \begin{proof} Assume that $c_\infty=1$. Given an element $g\in G$, set $\mathcal H_g=\{H<G: g\in H\}$. Note that $\mathcal H_g\subset \fix_\Omega(g)$. Therefore,
 $$\varphi(\mathcal H_g) \leq \varphi(\fix_\Omega(g)) = \chi_\reg(g) = 0,$$ whenever $g\neq e$.  Since $\cup_{g\neq e} \mathcal H_g=\Omega\setminus \{e\}$, we conclude that $\varphi$ is supported on $\{e\}$. Thus, $\varphi= \delta_{\{e\}}$ is an atomic measure, which is a contradiction. \end{proof}

Set $$r = \min\{k:c_\beta>0\;\;\text{for some}\;\;\beta \in \mathbb Z^k_{\geq 0}\;\;\text{with}\;\;|\beta|=r\}.$$ Fix $\beta_0$ such that $|\beta_0|=r$ and $c_{\beta_0}>0$.

\begin{lemma}\label{LemmaPhiFA} For any clopen set $A\subsetneq X$ we have that \begin{equation}\label{EqnF(A)=Sum_Measures} \varphi(\F(A)) = \sum_{\alpha \in
\mathbb Z^k_{\geq 0}} c_\alpha \mu_\alpha(A^{|\alpha|}).
\end{equation}
\end{lemma}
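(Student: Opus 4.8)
The plan is to evaluate $\varphi(\F(A))$ through the ergodic-averaging formula of Corollary \ref{corollaryLimitFixedPoints}(2) and then feed in the explicit decomposition of the character $\chi$. Since $\chi(g)=\varphi(\fix_\Omega(g))$ by definition, that corollary gives immediately
$$\varphi(\F(A)) = \lim_{n\to\infty} \frac{1}{|\G_n(A)|} \sum_{g \in \G_n(A)} \varphi(\fix_\Omega(g)) = \lim_{n\to\infty} \frac{1}{|\G_n(A)|} \sum_{g \in \G_n(A)} \chi(g).$$
The whole argument then reduces to computing this average term by term against the decomposition $\chi = \sum_{\alpha} c_\alpha \chi_\alpha + c_\infty \chi_{\reg}$ furnished by the classification of characters.

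First I would handle each finite-type character $\chi_\alpha$. Because $\chi_\alpha(g) = \mu_\alpha(\fix_{X^{|\alpha|}}(g))$, Corollary \ref{corollaryLimitFixedPoints}(1), applied with $\bar\mu = \mu_\alpha$ and $k = |\alpha|$, yields directly
$$\lim_{n\to\infty} \frac{1}{|\G_n(A)|} \sum_{g \in \G_n(A)} \chi_\alpha(g) = \mu_\alpha(A^{|\alpha|}).$$
Next I would dispose of the regular term: since $\chi_{\reg}(g)=0$ for $g\neq e$ and $\chi_{\reg}(e)=1$, only the identity contributes to the sum, so the corresponding average equals $1/|\G_n(A)|$. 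As $\G(A)$ is an infinite simple group (Remark \ref{RemarkFFamilyIncreasing} and Lemma \ref{LemmaProperties2}), its finite approximants satisfy $|\G_n(A)|\to\infty$, and this term vanishes in the limit.

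The one step that will require genuine care is interchanging the limit $n\to\infty$ with the infinite sum over $\alpha\in\mathbb Z^k_{\geq 0}$. Here I would invoke dominated convergence for series: each averaged summand satisfies $0 \le \frac{1}{|\G_n(A)|}\sum_{g\in\G_n(A)} \chi_\alpha(g) \le 1$, because $\chi_\alpha$ takes values in $[0,1]$ as the measure of a set, while the weights obey $c_\alpha\ge 0$ and $\sum_\alpha c_\alpha \le 1$. Thus $(c_\alpha)$ serves as a summable dominating sequence, the termwise limits are $c_\alpha\,\mu_\alpha(A^{|\alpha|})$, and the limit may be passed inside the sum. Combining the three computations gives
$$\varphi(\F(A)) = \sum_{\alpha\in\mathbb Z^k_{\geq 0}} c_\alpha\,\mu_\alpha(A^{|\alpha|}) + c_\infty\cdot 0 = \sum_{\alpha\in\mathbb Z^k_{\geq 0}} c_\alpha\,\mu_\alpha(A^{|\alpha|}),$$
which is the asserted identity. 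The main obstacle is therefore not any single estimate but securing the uniform bound and the summability of the weights needed to justify this interchange; everything else follows mechanically from Corollary \ref{corollaryLimitFixedPoints} and the character decomposition.
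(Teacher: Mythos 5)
Your proof is correct and follows essentially the same route as the paper: both apply Corollary \ref{corollaryLimitFixedPoints} termwise to the character decomposition $\chi=\sum_\alpha c_\alpha\chi_\alpha+c_\infty\chi_\reg$ and then interchange the limit over $n$ with the sum over $\alpha$. The only difference is in the justification of that interchange --- the paper invokes the Monotone Convergence Theorem (using the monotonicity stated in Corollary \ref{corollaryLimitFixedPoints}), while you use dominated convergence with the summable weights $c_\alpha$; both are valid, and you are in fact more explicit than the paper about why the regular-character term vanishes.
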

\begin{proof}
Recall that for any $g\in G$ we have that $$\varphi(\fix_\Omega(g))=c_\infty
\chi_\reg(g) + \sum_{\alpha \in
\mathbb Z^k_{\geq 0}} c_\alpha \mu_\alpha(\fix_{X^{|\alpha|}}(g)).$$
Recall that by Lemma \ref{LemmaProperties2} for any clopen set $A\subsetneq X$, the orbits of $\G(A)$ on $X\setminus A$ are infinite. In particular, $|\G_n(A)|\to\infty$ as $n\to\infty$.  Using Corollary \ref{corollaryLimitFixedPoints} and the Monotone Convergence
Theorem,  we obtain that for a given clopen set $A\subsetneq X$,
\begin{eqnarray*} \varphi(\F(A)) & = &
\lim_{n\to\infty}\frac{1}{|\G_n(A)|}\sum\limits_{g\in
\G_n(A)}\varphi(\fix_{\Omega}(g)) \\
& = & \lim_{n\to\infty}\frac{1}{|\G_n(A)|}  \sum\limits_{g\in
\G_n(A)} \bigg( c_\infty
\chi_\reg(g) + \sum_{\alpha \in
\mathbb Z^k_{\geq 0}} c_\alpha \mu_\alpha(\fix_{X^{|\alpha|}}(g)) \bigg) \\
 & = & \sum_{\alpha \in
\mathbb Z^k_{\geq 0}} \lim_{n\to\infty}\frac{1}{|\G_n(A)|}
\sum\limits_{g\in \G_n(A)}
 c_\alpha \mu_\alpha(\fix_{X^{|\alpha|}}(g)) \\
 & = & \sum_{\alpha \in
\mathbb Z^k_{\geq 0}} c_\alpha \mu_\alpha(A^{|\alpha|}).
\end{eqnarray*}
\end{proof}
Fix a  sequence of nested clopen partitions $\{\Xi_n\}_{n=1}^\infty$ of $X$
with $\max\limits_{C\in \Xi_n}\mathrm{diam}(C)\to 0.$ Denote by $\Xi_{p,n}$
the collection of $p$-tuples $\{C_1,\ldots,C_p\}$ of pairwise distinct sets $C_i\in \Xi_n$,
$i=1,\ldots, p$.  For $\mathcal{ C} = \{C_1,\ldots, C_p\}\in \Xi_{p,n}$ denote by
$\cup \mathcal{ C}$ the union $C_1\cup \cdots \cup C_p$. Notice that $\Xi_{p,n}$ is empty whenever $|\Xi_n|<p$. Fix $N$ large enough so that for any $p\leq r$, $n\geq N$ and any $\mathcal C,\mathcal D\in \Xi_{p,n}$ we have $(\cup \mathcal{ C})\cup (\cup \mathcal{ D})\neq X$. Thus, by Lemma \ref{LemmaProperties1} $$\mathcal F((\cup \mathcal{ C})\cap (\cup \mathcal{ D}))=\mathcal F(\cup \mathcal{ C})\cap \mathcal F(\cup \mathcal{ D}).$$
%
%

\begin{lemma}\label{LmPhiUnF} For any $n\geq N$ and any $p\leq r$ we have that \begin{eqnarray*} \varphi\bigg( \bigcup_{\mathcal{C}\in \Xi_{p,n}} \F(\cup \mathcal{C}) \bigg) & = &\sum_{\alpha \in
\mathbb Z^k_{\geq 0}} c_\alpha \mu_\alpha\bigg( \bigcup_{\mathcal{C}\in \Xi_{p,n}} (\cup\mathcal{C})^{|\alpha|} \bigg).
\end{eqnarray*}
\end{lemma}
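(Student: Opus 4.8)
The plan is to reduce the statement to Lemma~\ref{LemmaPhiFA} by running the inclusion--exclusion formula on both sides of the claimed identity and matching the two expansions term by term. Since $\Xi_{p,n}$ is a finite index set and $\varphi$ is a probability measure, I would write
\begin{equation*}
\varphi\bigg( \bigcup_{\overline{C}\in \Xi_{p,n}} \F(\cup \overline{C}) \bigg) = \sum_{\varnothing \neq S \subseteq \Xi_{p,n}} (-1)^{|S|+1}\, \varphi\bigg( \bigcap_{\overline{C}\in S} \F(\cup \overline{C}) \bigg),
\end{equation*}
and likewise expand each $\mu_\alpha\big(\bigcup_{\overline{C}} (\cup\overline{C})^{|\alpha|}\big)$ appearing on the right-hand side. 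It then suffices to show that for every nonempty $S\subseteq \Xi_{p,n}$ the corresponding intersection terms agree, i.e.\ $\varphi\big(\bigcap_{\overline{C}\in S}\F(\cup\overline C)\big)=\sum_\alpha c_\alpha\,\mu_\alpha\big(\bigcap_{\overline C\in S}(\cup\overline C)^{|\alpha|}\big)$.

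To handle a single intersection term I would first turn the intersection of the $\F$'s into a single $\F$. Iterating Lemma~\ref{LemmaProperties1}(2) gives $\bigcap_{\overline{C}\in S}\F(\cup\overline{C})=\F\big(\bigcap_{\overline{C}\in S}\cup\overline{C}\big)$, where at each step of the induction the hypothesis that the relevant union is not all of $X$ is needed. This is exactly where the choice of $N$ enters: for $n\geq N$ and $p\leq r$ any two sets satisfy $(\cup\overline{C})\cup(\cup\overline{D})\neq X$, and since $\bigcap_{\overline{C}\in T}\cup\overline{C}$ is contained in any one of its members, every intermediate union arising in the induction remains a proper subset of $X$, so Lemma~\ref{LemmaProperties1}(2) applies throughout. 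The same containment shows $\bigcap_{\overline{C}\in S}\cup\overline{C}\subsetneq X$, which is the hypothesis required to invoke Lemma~\ref{LemmaPhiFA}.

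With the intersection rewritten as a single $\F$, applying Lemma~\ref{LemmaPhiFA} yields
\begin{equation*}
\varphi\bigg(\F\Big(\bigcap_{\overline{C}\in S}\cup\overline{C}\Big)\bigg)=\sum_{\alpha\in\mathbb Z^k_{\geq 0}} c_\alpha\,\mu_\alpha\bigg(\Big(\bigcap_{\overline{C}\in S}\cup\overline{C}\Big)^{|\alpha|}\bigg).
\end{equation*}
To close the argument I would use the elementary set identity $\big(\bigcap_{\overline{C}\in S}\cup\overline{C}\big)^{|\alpha|}=\bigcap_{\overline{C}\in S}(\cup\overline{C})^{|\alpha|}$ in $X^{|\alpha|}$ --- a tuple lies in the left side iff all of its coordinates lie in every $\cup\overline{C}$ --- so that each term on the right matches the corresponding term in the inclusion--exclusion expansion of $\mu_\alpha\big(\bigcup_{\overline{C}}(\cup\overline{C})^{|\alpha|}\big)$. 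Summing the matched terms over $S$ with signs $(-1)^{|S|+1}$ then recovers both sides of the lemma. I expect the only genuine obstacle to be the bookkeeping in the inductive application of Lemma~\ref{LemmaProperties1}(2): one must verify that the pairwise condition $(\cup\overline{C})\cup(\cup\overline{D})\neq X$ secured by $N$ suffices to keep every nested union proper, which follows from the monotonicity $\bigcap_{\overline{C}\in T}\cup\overline{C}\subseteq\cup\overline{C}$ but deserves to be stated explicitly.
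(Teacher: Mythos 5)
Your proposal is correct and follows essentially the same route as the paper: inclusion--exclusion over $\Xi_{p,n}$, collapsing intersections of $\F$'s via Lemma~\ref{LemmaProperties1}, applying Lemma~\ref{LemmaPhiFA}, and using $(A\cap B)^{|\alpha|}=A^{|\alpha|}\cap B^{|\alpha|}$ before re-assembling by inclusion--exclusion. Your explicit check that the nested unions stay proper (via $\bigcap_{\overline{C}\in T}\cup\overline{C}\subseteq\cup\overline{C}_0$) is a detail the paper leaves implicit, and it is handled correctly.
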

\begin{proof} Note that for any sets $A,B\subset X$, \begin{equation}\label{EquationAIntersectB}(A\cap B)^{|\alpha|} = A^{|\alpha|}\cap B^{|\alpha|}.\end{equation}
For any $n\geq N$ applying the inclusion-exclusion principle (IEP) to the union of sets in $\Xi_{p,n}$, we obtain that
\[
\begin{array}{lcl}
 \varphi\bigg( \bigcup_{\mathcal{C}\in \Xi_{p,n}} \F(\cup \mathcal{C}) \bigg) & = &
\sum_{\emptyset \neq J\subset \Xi_{p,n}} (-1)^{|J|-1}\varphi\bigg(\bigcap_{\mathcal{C}\in J} \F(\cup \mathcal{C}) \bigg)  \\
\mbox{by Lemma \ref{LemmaProperties1}}& = &
 \sum_{\emptyset \neq J\subset \Xi_{p,n}} (-1)^{|J|-1}\varphi\bigg(\F\bigg( \bigcap_{\mathcal{C}\in J} \cup \mathcal{C}\bigg ) \bigg)  \\
\mbox{by Equation (\ref{EqnF(A)=Sum_Measures})} & = & \sum_{\alpha \in
\mathbb Z^k_{\geq 0}} c_\alpha \sum_{\emptyset \neq J\subset \Xi_{p,n}} (-1)^{|J|-1}  \mu_\alpha\bigg(\bigg( \bigcap_{\mathcal{C}\in J} \cup \mathcal{C}\bigg )^{|\alpha|}\bigg)\\
 \mbox{by Equation (\ref{EquationAIntersectB})}& = & \sum_{\alpha \in
\mathbb Z^k_{\geq 0}} c_\alpha \sum_{\emptyset \neq J\subset \Xi_{p,n}} (-1)^{|J|-1}  \mu_\alpha\bigg(\bigcap_{\mathcal{C}\in J} (\cup \mathcal{C})^{|\alpha|}\bigg)\\
\mbox{by the IEP} & = & \sum_{\alpha \in
\mathbb Z^k_{\geq 0}} c_\alpha \mu_\alpha\bigg( \bigcup_{\mathcal{C}\in \Xi_{p,n}} (\cup\mathcal{C})^{|\alpha|} \bigg).
\end{array}
\]
\end{proof}

\noindent Since the union $\bigcup\limits_{\mathcal{ C}\in\Xi_{r,n}}(\cup \mathcal{ C})^{r}$ covers the set $X^r$, we immediately obtain the following result.  \begin{corollary}\label{CoPhiLowerEstimates} For any $n\geq N$ we have
\begin{equation}\label{EqPhiLowerEstimates}\varphi\left( \bigcup_{\mathcal{ C}\in\Xi_{r,n}} \F(\cup\mathcal{ C}) \right)\geq c_{\beta_0}>0.\end{equation}\end{corollary}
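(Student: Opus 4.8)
The plan is to read the bound directly off Lemma \ref{LmPhiUnF}, keeping only the single summand indexed by $\beta_0$. Specializing that lemma to $p=r$ yields
$$\varphi\left( \bigcup_{\overline{C}\in\Xi_{r,n}} \F(\cup\overline{C}) \right) = \sum_{\alpha \in \mathbb{Z}^k_{\geq 0}} c_\alpha\, \mu_\alpha\left( \bigcup_{\overline{C}\in\Xi_{r,n}} (\cup\overline{C})^{|\alpha|} \right).$$
Every coefficient $c_\alpha$ is nonnegative and every measure appearing is nonnegative, so the entire sum dominates its $\alpha=\beta_0$ term. Since $|\beta_0|=r$, this term equals $c_{\beta_0}\,\mu_{\beta_0}\big( \bigcup_{\overline{C}\in\Xi_{r,n}} (\cup\overline{C})^{r} \big)$, and it remains to show that the measure here equals $1$.

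The one point that needs verification — routine, but worth spelling out — is the covering identity $\bigcup_{\overline{C}\in\Xi_{r,n}} (\cup\overline{C})^{r} = X^r$ flagged just before the statement. I would prove it pointwise: given $(x_1,\ldots,x_r)\in X^r$, each coordinate $x_i$ lies in a unique cell $D_i\in\Xi_n$, since $\Xi_n$ partitions $X$. These $D_i$ need not be distinct, but they span at most $r$ distinct cells; because the choice of $N$ already forces $|\Xi_n|\geq r$ for $n\geq N$ (this is implicit in the requirement that $(\cup\overline{C})\cup(\cup\overline{D})\neq X$ for $r$-element families), I can enlarge this collection to a family $\overline{C}=\{C_1,\ldots,C_r\}\in\Xi_{r,n}$ of pairwise distinct cells containing every $D_i$. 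Then $x_i\in D_i\subseteq\cup\overline{C}$ for all $i$, whence $(x_1,\ldots,x_r)\in(\cup\overline{C})^{r}$; the reverse inclusion is immediate.

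Since $\mu_{\beta_0}$ is a probability measure on $X^{|\beta_0|}=X^r$, the covering gives $\mu_{\beta_0}\big(\bigcup_{\overline{C}\in\Xi_{r,n}}(\cup\overline{C})^{r}\big)=\mu_{\beta_0}(X^r)=1$. Substituting back produces
$$\varphi\left( \bigcup_{\overline{C}\in\Xi_{r,n}} \F(\cup\overline{C}) \right) \geq c_{\beta_0}\cdot 1 = c_{\beta_0}>0,$$
which is exactly the asserted estimate. I do not anticipate any genuine obstacle here: the analytic content is already contained in Lemma \ref{LmPhiUnF}, and the corollary merely extracts the $\beta_0$-term after the elementary covering observation, so the only care required is the bookkeeping that $N$ was indeed chosen large enough to guarantee $\Xi_{r,n}\neq\varnothing$.
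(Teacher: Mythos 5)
Your proposal is correct and follows exactly the paper's route: the authors also obtain the corollary by specializing Lemma \ref{LmPhiUnF} to $p=r$, observing that $\bigcup_{\overline{C}\in\Xi_{r,n}}(\cup\overline{C})^{r}$ covers $X^r$, and retaining only the $\beta_0$-summand. The only difference is that you spell out the pointwise covering argument (and the need for $|\Xi_n|\geq r$) which the paper leaves as a one-line remark.
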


For $p\in\mathbb Z_{\geq 0}$ denote by $X_p$ the collection of all finite subsets of $X$ of cardinality at most $p$.  Given $\bar x = \{x_1,\ldots,x_m \}\in X_p$, denote by $\mathcal{ C}_n(\bar x) = \{C_1,\ldots,C_q\}$ the element of $\Xi_{q,n}$ with $x_i\in \cup\mathcal{ C}_n(\bar x)$ for $i=1,\ldots,m$ with $q$ being the least possible. Thus, for every $1\leq j\leq q$ there exists at least one $1\leq i\leq m$ such that $x_i\in C_j$. Notice that $q\leq m \leq p$ and $q=m$ for sufficiently large $n$.

\begin{lemma}\label{LemmaIntersFC} For any $p\geq 0$ we have  \begin{eqnarray}\label{EqDescriptionGamma}\bigcap_{n\geq N}\bigcup_{\mathcal C\in \Xi_{p,n}} \F(\cup \mathcal C) = \bigcup\limits_{\bar x\in X_p}\mathcal F(\bar x).\end{eqnarray}\end{lemma}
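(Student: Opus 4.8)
The plan is to prove the set equality \eqref{EqDescriptionGamma} by establishing the two inclusions separately. The two tools I expect to rely on are the monotonicity of $\F$ from Remark \ref{RemarkFFamilyIncreasing}(1) and the elementary fact that $\fix_X(g)$ is clopen for every $g\in G$ (an element of $G$ permutes only finitely many initial segments, so whether it fixes a path depends only on a bounded-length initial segment). Throughout I would take $n\ge N$ large enough that $|\Xi_n|\ge p$; this is automatic since $\max_{C\in\Xi_n}\mathrm{diam}(C)\to0$ forces $|\Xi_n|\to\infty$ on the perfect space $X$.

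The inclusion $\supseteq$ should be the routine direction. Fixing $\bar x=\{x_1,\dots,x_m\}\in X_p$ with $m\le p$ and an arbitrary $n\ge N$, let $\{C_1,\dots,C_q\}$ be the distinct cells of $\Xi_n$ meeting $\bar x$; then $q\le m\le p$ and $\bar x\subset C_1\cup\cdots\cup C_q$. Adjoining $p-q$ further distinct cells of $\Xi_n$ produces $\overline C\in\Xi_{p,n}$ with $\bar x\subset\cup\overline C$, so Remark \ref{RemarkFFamilyIncreasing}(1) gives $\F(\bar x)\subset\F(\cup\overline C)\subset\bigcup_{\overline D\in\Xi_{p,n}}\F(\cup\overline D)$. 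Since $n\ge N$ was arbitrary, $\F(\bar x)$ lies in the intersection over all such $n$, and taking the union over $\bar x\in X_p$ yields the containment.

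For the reverse inclusion $\subseteq$, I would fix $K$ in the left-hand set. For each $n\ge N$ the hypothesis supplies a tuple $\overline C^{(n)}=\{C_1^{(n)},\dots,C_p^{(n)}\}\in\Xi_{p,n}$ with $K\in\F(\cup\overline C^{(n)})$; choose a representative $x_i^{(n)}\in C_i^{(n)}$ for each $i$. By compactness of $X^p$ I would pass to a subsequence $n_1<n_2<\cdots$ along which $(x_1^{(n_j)},\dots,x_p^{(n_j)})\to(x_1,\dots,x_p)$ and set $\bar x=\{x_1,\dots,x_p\}$, a set of cardinality at most $p$, so $\bar x\in X_p$. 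The claim is then $K\in\F(\bar x)$: given $g\in\G(\bar x)$, we have $\bar x\subset\fix_X(g)$ with $\fix_X(g)$ clopen, and since $x_i^{(n_j)}\to x_i\in\fix_X(g)$ while $\mathrm{diam}(C_i^{(n_j)})\to0$ and $x_i^{(n_j)}\in C_i^{(n_j)}$, for all $j$ large enough each cell $C_i^{(n_j)}$ is contained in $\fix_X(g)$. Hence $\cup\overline C^{(n_j)}\subset\fix_X(g)$, i.e.\ $g\in\G(\cup\overline C^{(n_j)})$, and because $K\in\F(\cup\overline C^{(n_j)})$ this forces $gKg^{-1}=K$. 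As $g\in\G(\bar x)$ was arbitrary, $K\in\F(\bar x)\subset\bigcup_{\bar x\in X_p}\F(\bar x)$.

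The only genuine obstacle is this second inclusion: the witnessing unions $\cup\overline C^{(n)}$ at different levels need not be nested, so Lemma \ref{LemmaDecrFam} cannot be applied directly to them. Passing to a convergent subsequence of representatives is exactly the device for manufacturing a single finite limit set $\bar x$ whose pointwise stabilizer $\G(\bar x)$ is \emph{eventually contained} in each $\G(\cup\overline C^{(n_j)})$; it is this containment of stabilizers, rather than any set-theoretic nesting of the sets themselves, that transfers the normalization condition from $\cup\overline C^{(n_j)}$ to $\bar x$.
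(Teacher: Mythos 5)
Your proof is correct. The $\supseteq$ direction matches the paper's (you are in fact slightly more careful, padding the minimal covering tuple $\overline C_n(\bar x)$ out to a genuine element of $\Xi_{p,n}$ when fewer than $p$ cells meet $\bar x$). For the $\subseteq$ direction you take a genuinely different route. The paper confronts the very obstacle you identify --- the witnessing sets $\cup\overline C^{(n)}$ at different levels need not be nested --- by intersecting them: it sets $B_n=\bigcap_{j\leq n}\cup\overline C^{(j)}$, uses Lemma \ref{LemmaProperties1}(2) to conclude $H\in\F(B_n)$, observes that the clopen sets $B_n$ decrease to a subset of $X$ with at most $p$ points, and then applies Lemma \ref{LemmaDecrFam}. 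You instead extract a convergent subsequence of representatives $x_i^{(n_j)}\in C_i^{(n_j)}$ and verify the normalization condition directly via the clopenness of $\fix_X(g)$, which yields $\G(\bar x)\subset\G(\cup\overline C^{(n_j)})$ for $j$ large. Your closing step is essentially the proof of Lemma \ref{LemmaDecrFam} inlined and adapted to a non-nested family; what this buys is that you never invoke Lemma \ref{LemmaProperties1}, hence never need the condition $(\cup\overline C)\cup(\cup\overline D)\neq X$ that the choice of $N$ only arranges for $p\leq r$, so your argument covers the stated generality ``for any $p\geq 0$'' with no extra bookkeeping. The two constructions may produce different limit sets ($\bigcap_n B_n$ can be a proper subset of your $\bar x$), but both lie in $X_p$, so either one places the subgroup in $\bigcup_{\bar x\in X_p}\F(\bar x)$.
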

\begin{proof}
 Given $\bar x\in X_p$ and $n\geq N$, by Remark \ref{RemarkFFamilyIncreasing},  one has $$\mathcal F(\bar x)\subset\F(\cup \mathcal{ C}_n(\bar x))\subset\bigcup_{\mathcal C\in \Xi_{p,n}} \F(\cup \mathcal C).$$ Conversely, suppose $H\in  \F(\cup \mathcal C^{(n)})$ for every $n \geq N$, where $\mathcal{C}^{(n)}\in \Xi_{p,n}$. Set $B_n=\bigcap_{j=1}^n \cup \mathcal C^{(j)}.$ Using Lemma \ref{LemmaProperties1}, we obtain that $H\in  \F(B_n)$ for every $n \geq N$. The sequence $\{B_n\}$ is decreasing and converging to a $q$-element subset $\bar x=\{x_1,\ldots,x_q\}$ of $X$ for some $q\leq p$.  Using Lemma \ref{LemmaDecrFam} we obtain that $H\in \mathcal F(\bar x)$. This completes the proof.\end{proof}
%
%
Note that by Remark \ref{RemarkFFamilyIncreasing} the  sets $\left \{\bigcup_{\mathcal C\in \Xi_{r,n}} \F(\cup \mathcal C)\right \},n\geq N,$ form a decreasing family.
It follows from Corollary \ref{CoPhiLowerEstimates} and Lemma \ref{LemmaIntersFC} that
\begin{equation}\label{EqLoweBound}\varphi\left( \bigcup_{\bar x\in X_r}\F(\bar x)\right) \geq c_{\beta_0}>0.\end{equation}
Therefore, by ergodicity of $\varphi$, we obtain that the measure $\varphi$ is supported by the set $ \bigcup_{\bar x\in X_r}\F(\bar x)$.   We can further assume that $r$ is the least number for which Equation (\ref{EqLoweBound}) still holds, which implies that the measure $\varphi$ is supported by the set $\bigcup_{\bar x\in W_r}\F(\bar x)$, where $W_r$ consists of all $r$-element subsets of $X_r$. That is,
\begin{equation}\label{EqLoweBoundW}\varphi\left( \bigcup_{\bar x\in W_r}\F(\bar x)\right)=1.\end{equation}
 Denote by $Z_r$ be the subset of $W_r$ consisting of sets $\bar x$ such that the orbits $\{Gy : y\in \bar x\}$ are pairwise distinct.

\begin{lemma}\label{LemmaPhiXr-Zr}  We have that
$$\varphi\left(\bigcup_{\bar x\in W_r\setminus Z_r}\F(\bar x)\right) = 0.$$
\end{lemma}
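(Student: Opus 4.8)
The plan is to show that the degenerate locus
$\mathcal D:=\bigcup_{\bar x\in W_r\setminus Z_r}\F(\bar x)$ is $\varphi$-null by presenting it, up to $\varphi$-measure, as a nested cover whose $\varphi$-mass equals a sum of $\mu_\alpha$-masses of ``collision sets'' in $X^{|\alpha|}$, and then observing that these collision sets are $\mu_\alpha$-null. First I would record two reductions. Since conjugation carries $\F(\bar x)$ to $\F(g\bar x)$ and the property ``$\bar x$ contains two points in a common $G$-orbit'' is $G$-invariant, the set $\mathcal D$ is invariant under conjugation; by ergodicity of $\varphi$ it therefore suffices to prove $\varphi(\mathcal D)<1$, that is $\varphi(\mathcal D)=0$. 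By subadditivity over the finitely many choices of which pair of coordinates collides, it is enough to bound the $\varphi$-mass of the configurations in which one designated pair lies in a common orbit.

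Next I would convert the orbit condition into finite-level data. Two points of $X$ lie in the same $G$-orbit exactly when they are cofinal, which holds iff they lie in a common $G_m$-orbit for some (equivalently all large) $m$; writing $y\sim_m y'$ for the latter, the relations $\sim_m$ increase to the cofinality relation. For fixed $m$ and a truncation depth $M\ge m$, the condition ``same vertex at level $m$ and equal edges on levels $m+1,\dots,M$'' is clopen and decreases, as $M\to\infty$, to $\sim_m$. Using the nested clopen partitions $\Xi_n$ already fixed in the main argument, I would form, for each $n\ge M$, the subcollection of those $r$-tuples $\overline C\in\Xi_{r,n}$ two of whose members carry this truncated collision, and cover the designated-pair part of $\mathcal D$ by $\bigcup\F(\cup\overline C)$ over this subcollection. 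By Remark~\ref{RemarkFFamilyIncreasing} each relevant $\F(\bar x)$ is contained in the corresponding $\F(\cup\overline C)$, and Lemma~\ref{LemmaProperties1} together with the inclusion--exclusion computation of Lemma~\ref{LmPhiUnF} and the identity \eqref{EqnF(A)=Sum_Measures} express the $\varphi$-mass of each such cover as $\sum_\alpha c_\alpha\,\mu_\alpha(\cdot)$ evaluated on the matching union of product cylinders.

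Passing to the limits in the order $n\to\infty$ (which, as in Lemma~\ref{LemmaIntersFC}, collapses each cover onto $\F$ of a genuine finite limit configuration and separates the non-colliding coordinates), then $M\to\infty$, then $m\to\infty$, and using Lemma~\ref{LemmaDecrFam} and monotone convergence on both sides, I would obtain
$$\varphi(\mathcal D)\ \le\ \sum_{\alpha\in\mathbb Z^k_{\ge 0}} c_\alpha\,\mu_\alpha\big(\{\bar y\in X^{|\alpha|}\ :\ \text{two coordinates of }\bar y\text{ are cofinal}\}\big).$$
Each summand on the right vanishes: every $\mu_i$ is non-atomic, since the action is minimal on a perfect space and hence admits no finite orbits, while the cofinality class of any point is countable; so by Fubini the set of cofinal pairs is $\mu_i\times\mu_j$-null for all $i,j$, and the entire collision set is $\mu_\alpha$-null. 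Therefore $\varphi(\mathcal D)=0$, as claimed.

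The main obstacle is the mismatch between the data visible to the assignment $A\mapsto\F(A)$ and the orbit relation: clopen, initial-segment information eventually separates distinct points but never detects cofinality, which is a tail condition. The resolution is precisely the two-parameter truncation above --- fixing the level $m$ that defines the orbit collision together with a finite depth $M$ witnessing it --- so that every intermediate stage is genuinely clopen and accessible to Lemmas~\ref{LemmaProperties1}--\ref{LmPhiUnF}; the delicate point is to verify that the three successive limits commute with the $\varphi$- and $\mu_\alpha$-masses, which is exactly what the monotonicity in Corollary~\ref{corollaryLimitFixedPoints} and the continuity supplied by Lemma~\ref{LemmaDecrFam} guarantee.
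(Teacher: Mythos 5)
Your strategy is genuinely different from the paper's, and while I believe it could be pushed through, as written it has a real gap at the limit-identification step, which is precisely the point you defer to ``monotonicity and continuity.'' Two concrete problems. First, to run the scheme of Lemmas~\ref{LmPhiUnF} and \ref{LemmaIntersFC} on the subcollection of colliding tuples $\overline C$, you need that subcollection to produce a \emph{decreasing} family of covers as $n$ grows; but a pair of distinct $(m,M)$-colliding cells at level $n+1$ may sit inside a \emph{single} cell of $\Xi_n$, so the parent configuration need not be colliding, and nestedness fails. (This is repairable by working with a fixed-$n$ upper bound $\varphi(\mathcal D_m)\leq\sum_\alpha c_\alpha\,\mu_\alpha(\cdot)$ and sending $n\to\infty$ only on the measure side, where the relevant events --- ``two coordinates share a cell'' and ``two coordinates agree at the level-$m$ vertex and on edges $m+1,\dots,M_n$'' --- genuinely decrease; but that is an argument you have not made.) Second, the identification of $\lim_n\mu_\alpha\bigl(\bigcup(\cup\overline C)^{|\alpha|}\bigr)$ with the cofinal-collision set silently discards the pigeonhole case in which one colliding cell contains no coordinate and two coordinates are forced into a common cell: that contributes the diagonal $\{y_s=y_t\}$, not a cofinality event, and must be killed separately by non-atomicity of the $\mu_i$ (which, to be fair, your Fubini remark does cover once one notices equal points are cofinal). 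For $|\alpha|<r$ your displayed inequality is false as stated --- the limit sets are essentially all of $X^{|\alpha|}$ --- and is only harmless because $c_\alpha=0$ there; this should be said. Finally, Lemma~\ref{LmPhiUnF} is stated only for the full collection $\Xi_{p,n}$, so its extension to subcollections, though routine, needs to be recorded.

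The paper avoids all of this analysis. Its proof uses only the conjugation-invariance of $\varphi$ and the already-established fact that $\bigcup_{\bar x\in X_{r-1}}\F(\bar x)$ is $\varphi$-null: one covers $W_r\setminus Z_r$ by countably many pieces $C_{A,g}$ (tuples containing a pair $y,\,gy$ with $y\in A$, $g(A)\cap A=\varnothing$, and no other point in $A\cup gA$), and for each such piece produces infinitely many translates $h_jC_{A,g}$ that are pairwise disjoint as sets of configurations. Since distinct $r$-point configurations intersect in at most $r-1$ points, Lemma~\ref{LemmaProperties1} forces the sets $\bigcup_{\bar x\in h_jC_{A,g}}\F(\bar x)$ to pairwise intersect inside the null set $\bigcup_{\bar x\in X_{r-1}}\F(\bar x)$; they all have equal $\varphi$-measure by invariance, and infinitely many essentially disjoint sets of equal measure in a probability space must each be null. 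This sidesteps the character decomposition, the partitions $\Xi_n$, and every limit interchange entirely. If you want to salvage your approach, the honest payoff is that it localizes the vanishing to an explicit statement about product measures ($G$-orbits are countable, hence $\mu_i\times\mu_j$-null); but you should either fill in the nestedness and pigeonhole issues above or adopt the translation argument.
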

\begin{proof} First, we construct a collection $\mathfrak C$ of measurable subsets of $W_r\setminus Z_r$ covering $W_r\setminus Z_r$ such that for any $C\in \mathfrak C$ there exists a sequence $h_j\in G$ for which the sets $h_jC$, $j\geq 1$,  are pairwise disjoint. Given a clopen set $A$ and an element $g\in G$ with $g(A)\cap A=\varnothing$, set $$C_{A,g}=\{\bar x\in W_r:\exists y,z\in \bar x,\; y\in A,\;z=gy\;\text{and}\;t\notin A\cup gA\;\text{for all}\;t\in \bar x\setminus \{y,z\}\}.$$ Given $A,g$ as above, pick a sequence $h_j\in G$ with $\supp(h_ih_j^{-1})=A$ for each $i\neq j$. Then the sets $h_jC_{A,g},j\geq 1,$ are pairwise disjoint. The collection $\mathfrak C$ will consist of all set of the form $C_{A,g}$. Notice that the collection $\mathfrak C$ covers $W_r\setminus Z_r$.

Given $C\in \mathfrak C$ and a sequence of group elements $\{h_j\}$ as above, for any $i\neq j$ we have that
$$\left(\bigcup_{\bar x\in h_iC} \F(\bar x) \right) \cap \left(\bigcup_{\bar x\in h_jC} \F(\bar x) \right) \subset \left(\bigcup_{\bar x\in X_{r-1}} \F(\bar x)\right),$$ which is a $\varphi$-null set. Since
 $$\varphi\left(\bigcup_{\bar x\in h_jC} \F(\bar x)\right)=\varphi\left(\bigcup_{\bar x\in C} \F(\bar x)\right)$$ for any $j$ and $\varphi$ is a probability measure, we obtain that $\varphi(\bigcup_{\bar x\in C} \F(\bar x)) = 0$. Thus, the set $\bigcup_{\bar x\in W_r\setminus Z_r}\F(\bar x)$ can be covered by a countable collection of $\varphi$-null sets. This establishes the result.
\end{proof}

Combing Equation \eqref{EqLoweBoundW}, Lemma \ref{LemmaPhiXr-Zr} and Lemma \ref{LemmaProperties3}, we conclude that the measure $\varphi$ is supported by the $G$-invariant set  $\mathcal S= \bigcup_{l \geq 0}\{\G(\bar x):\bar x\in X_l\}$. Observe that $\G({\bar x})$ is the stabilizer subgroup of $\bar x$, that is, $g\in \G(\bar x)$ if and only if $g(x_i) = x_i$ for every $i$.
  It follows that for every $\alpha\in \mathbb Z^k_{\geq 0}$ the measure $\varphi_\alpha$ is also supported by the set $\mathcal S$. By Lemma \ref{LemmaPhiFA} for every clopen set $A\subsetneq X$ we have
$$\varphi(\F(A))=\sum_{\alpha\in \mathbb Z_{\geq 0}^k} c_\alpha\varphi_\alpha(\F(A)).$$  Furthermore, it is straightforward to check that the collection of sets
 $$\mathcal A=\{\mathcal F(A):A\subset X\;\text{is clopen and such that }\mu_i(A)<1/3\mbox{ for every }i\}$$ separates points of $\mathcal S$, \ie for any pair of elements of $\mathcal S$ there exists $F\in\mathcal A$ containing exactly one of the elements from the pair. Thus, the family $\mathcal A$ generates the Borel $\sigma$-algebra on $\mathcal S$. Moreover, the condition that $\mu_i(A)<1/3$ for every $i$ and every $A\in \mathcal A$ ensures that the family $\mathcal A$  is closed under finite intersections, see  Lemma \ref{LemmaProperties1}. Therefore, by \cite[Corollary 1.6.3]{Cohn:MeasureTheory},  we conclude that $$\varphi \equiv \sum_{\alpha\in \mathbb Z^k_{\geq 0 }}c_\alpha\varphi_\alpha.$$ By ergodicity of $\varphi$, we obtain that $\varphi = \varphi_\alpha$ for some $\alpha\in \mathbb Z^k_{\geq 0}$, which completes the proof of Theorem \ref{TheoremMainIntro}.

%
%

\bibliographystyle{abbrv}
\bibliography{bibliography}

\begin{thebibliography}{10}

\bibitem{AbertGlasnerVirag:2014}
M.~Ab\'ert, Y.~Glasner, and B.~Vir\'ag.
\newblock Kesten's theorem for invariant random subgroups.
\newblock {\em Duke Math. J.}, 163(3):465--488, 2014.

\bibitem{BezuglyiKwiatkowskiMedynetsSolomyak:2013}
S.~Bezuglyi, J.~Kwiatkowski, K.~Medynets, and B.~Solomyak.
\newblock Finite rank {B}ratteli diagrams: structure of invariant measures.
\newblock {\em Trans. Amer. Math. Soc.}, 365(5):2637--2679, 2013.

\bibitem{Cohn:MeasureTheory}
D.~L. Cohn.
\newblock {\em Measure theory}.
\newblock Birkh\"auser Advanced Texts: Basler Lehrb\"ucher. [Birkh\"auser
  Advanced Texts: Basel Textbooks]. Birkh\"auser/Springer, New York, second
  edition, 2013.

\bibitem{CreutzPeterson:2017}
D.~Creutz and J.~Peterson.
\newblock Stabilizers of ergodic actions of lattices and commensurators.
\newblock {\em Trans. Amer. Math. Soc.}, 369(6):4119--4166, 2017.

\bibitem{Dudko:2011}
A.~Dudko.
\newblock Characters on the full group of an ergodic hyperfinite equivalence
  relation.
\newblock {\em J. Funct. Anal.}, 261(6):1401--1414, 2011.

\bibitem{DudkoMedynets:2013}
A.~Dudko and K.~Medynets.
\newblock On characters of inductive limits of symmetric groups.
\newblock {\em J. Funct. Anal.}, 264(7):1565--1598, 2013.

\bibitem{GiordanoPutnamSkau:1995}
T.~Giordano, I.~F. Putnam, and C.~F. Skau.
\newblock Topological orbit equivalence and {$C^*$}-crossed products.
\newblock {\em J. Reine Angew. Math.}, 469:51--111, 1995.

\bibitem{GiordanoPutnamSkau:1999}
T.~Giordano, I.~F. Putnam, and C.~F. Skau.
\newblock Full groups of {C}antor minimal systems.
\newblock {\em Israel J. Math.}, 111:285--320, 1999.

\bibitem{HermanPutnamSkau:1992}
R.~H. Herman, I.~F. Putnam, and C.~F. Skau.
\newblock Ordered {B}ratteli diagrams, dimension groups and topological
  dynamics.
\newblock {\em Internat. J. Math.}, 3(6):827--864, 1992.

\bibitem{KadisonRingrose:VolI}
R.~V. Kadison and J.~R. Ringrose.
\newblock {\em Fundamentals of the theory of operator algebras. {V}ol. {I}},
  volume 100 of {\em Pure and Applied Mathematics}.
\newblock Academic Press, Inc. [Harcourt Brace Jovanovich, Publishers], New
  York, 1983.
\newblock Elementary theory.

\bibitem{Kechris:BookGlobalAspects}
A.~S. Kechris.
\newblock {\em Global aspects of ergodic group actions}, volume 160 of {\em
  Mathematical Surveys and Monographs}.
\newblock American Mathematical Society, Providence, RI, 2010.

\bibitem{LeinenPuglisi:2004}
F.~Leinen and O.~Puglisi.
\newblock Positive definite functions of diagonal limits of finite alternating
  groups.
\newblock {\em J. London Math. Soc. (2)}, 70(3):678--690, 2004.

\bibitem{Thomas:2016}
S.~Thomas.
\newblock Characters of inductive limits of finite alternating groups.
\newblock {\em Preprint}, 2016.

\bibitem{ThomasTuckerDrob:2014}
S.~Thomas and R.~Tucker-Drob.
\newblock Invariant random subgroups of strictly diagonal limits of finite
  symmetric groups.
\newblock {\em Preprint}, 2014.

\bibitem{ThomasTuckerDrob:2016}
R.~Tucker-Drob and S.~Thomas.
\newblock Invariant random subgroups of inductive limits of finite alternating
  groups.
\newblock {\em J. Algebra}, 503:474--533, 2018.

\bibitem{Vershik:2010}
A.~M. Vershik.
\newblock Nonfree actions of countable groups and their characters.
\newblock {\em Zap. Nauchn. Sem. S.-Peterburg. Otdel. Mat. Inst. Steklov.
  (POMI)}, 378(Teoriya Predstavleni\u\i , Dinamicheskie Sistemy, Kombinatornye
  Metody. XVIII):5--16, 228, 2010.

\bibitem{Vershik:NonfreeActionsSymmetricGroup:2012}
A.~M. Vershik.
\newblock Totally nonfree actions and the infinite symmetric group.
\newblock {\em Mosc. Math. J.}, 12(1):193--212, 216, 2012.

\end{thebibliography}

\end{document}